\newcolumntype{C}{>{$}c<{$}} 
\numberwithin{equation}{section}
\theoremstyle{plain}
\newtheorem{theorem}[subsubsection]{Theorem}
\newtheorem{lemma}[subsubsection]{Lemma}
\newtheorem{prop}[subsubsection]{Proposition}
\newtheorem{cor}[subsubsection]{Corollary}
\theoremstyle{definition}
\newtheorem{defn}[subsubsection]{Definition}
\newtheorem{remark}[subsubsection]{Remark}
\def\AA{\mathbb{A}}
\def\CC{\mathbb{C}}
\def\HH{\mathbb{H}}
\def\PP{\mathbb{P}}
\def\QQ{\mathbb{Q}}
\def\SS{\mathbb{S}}
\def\WW{\mathbb{W}}
\def\ZZ{\mathbb{Z}}
\newcommand\cB{\mathcal{B}}
\newcommand\cC{\mathcal{C}}
\newcommand\cE{\mathcal{E}}
\newcommand\cF{\mathcal{F}}
\newcommand\cG{\mathcal{G}}
\newcommand\cI{\mathcal{I}}
\newcommand\cL{\mathcal{L}}
\newcommand\cM{\mathcal{M}}
\newcommand\cO{\mathcal{O}}
\newcommand\cP{\mathcal{P}}
\newcommand\cS{\mathcal{S}}
\newcommand\cT{\mathcal{T}}
\newcommand\cV{\mathcal{V}}
\def\ba{\mathbf{a}}
\def\bd{\mathbf{d}}
\newcommand\frb{\mathfrak{b}}
\newcommand\frg{\mathfrak{g}}
\newcommand\frp{\mathfrak{p}}
\newcommand\frt{\mathfrak{t}}
\newcommand\dt{\widehat{\mathfrak{t}}}
\newcommand\wdS{\widehat{\Sigma}}
\newcommand{\gr}{\textup{gr}}
\newcommand\id{\textup{id}}
\newcommand\Loc{\textup{Loc}}
\newcommand{\opp}{\textup{opp}}
\newcommand\pt{\textup{pt}}
\newcommand\Rep{\textup{Rep}}
\newcommand\rk{\textup{rk}}
\newcommand\Spec{\textup{Spec}\ }
\newcommand\Stab{\textup{Stab}}
\newcommand\triv{\textup{triv}}
\newcommand\Aut{\textup{Aut}}
\newcommand\Hom{\textup{Hom}}
\newcommand\Isom{\textup{Isom}}
\newcommand\GL{\textup{GL}}
\newcommand\PGL{\textup{PGL}}
\newcommand\Br{\textup{Br}}
\newcommand{\Ad}{\textup{Ad}}
\renewcommand\sc{\textup{sc}}
\newcommand{\defeq}{\vcentcolon=}
\newcommand{\To}[1]{\overset{#1}{\longrightarrow}}
\newcommand\quash[1]{}
\renewcommand\a\alpha
\renewcommand\b\beta
\newcommand\G\Gamma
\newcommand\g\gamma
\renewcommand\d\delta
\newcommand\D\Delta
\renewcommand{\r}{\rho}
\newcommand{\s}{\sigma}
\renewcommand{\l}{\lambda}
\renewcommand{\L}{\Lambda}
\newcommand\Conn{\textup{Conn}}
\newcommand{\gt}{\frg(\!(t)\!)}
\newcommand{\SFL}{\textup{SFLoc}}
\newcommand{\SGL}{\textup{SGrLoc}}
\newcommand{\Fun}{\textup{Fun}}
\newcommand{\Dem}{\textup{Dem}}
\title[Stokes phenomenon of Kloosterman and Airy connections]{Stokes phenomenon of Kloosterman\\ and Airy connections}
\author{Andreas Hohl}
\thanks{A.H. was supported by the Deutsche Forschungsgemeinschaft (DFG, German Research Foundation), Projektnummer 465657531, and the grant G0B3123N from the Fonds voor Wetenschappelijk
Onderzoek – Vlaanderen (FWO, Research Foundation – Flanders).}
\address{(A.H.) KU Leuven, Departement Wiskunde, Celestijnenlaan 200B, B-3001 Leuven, Belgium; \textit{Current address:} Technische Universität Chemnitz, Fakultät für Mathematik, 09107 Chemnitz, Germany}
\author{Konstantin Jakob}
\thanks{K.J. acknowledges support (through Timo Richarz) by the European Research Council (ERC) under Horizon Europe (grant agreement no 101040935), by the Deutsche Forschungsgemeinschaft (DFG, German Research Foundation) TRR 326 \textit{Geometry and Arithmetic of Uniformized Structures}, project number 444845124 and the LOEWE professorship in Algebra, project number LOEWE/4b//519/05/01.002(0004)/87}
\address{(K.J.) TU Darmstadt, Fachbereich Mathematik, Schlossgartenstraße 7, 64289 Darmstadt, Germany}
\date{}
\keywords{Stokes phenomenon, Stokes local system, Stokes filtration, Stokes grading, Kloosterman connection, Airy connection, braid varieties, rigid connections}
\subjclass[2020]{34M40, 20G20, 14D23}
\begin{document}

\begin{abstract}
    We define categories of Stokes-filtered and Stokes-graded $G$-local systems for reductive groups $G$ and use the formalism of Tannakian categories to show that they are equivalent to the category of $G$-connections. We then use the interpretation of moduli spaces of Stokes-filtered $G$-local systems as braid varieties to prove physical rigidity of two well-known families of cohomologically rigid connections, the Kloosterman and Airy connections. In the Kloosterman case, our proof relies on Steinberg's cross-section. 
\end{abstract}

\maketitle
\tableofcontents

\section{Introduction}
\subsection{Kloosterman connections} Deligne's Kloosterman sheaves, defined in \cite{Deligne2}, are sheaf-theoretic versions of classical Kloosterman sums. They are $\ell$-adic sheaves on $\PP^1\setminus \{0,\infty\}$, with tame ramification at $0$, and wild ramification at $\infty$. Their de Rham analogues are special cases of irregular hypergeometric connections. These connections (as well as Deligne's Kloosterman sheaves) are physically rigid in the sense that they are uniquely determined by the isomorphism classes of their formal restrictions to $0$ and $\infty$.

In \cite{HNY}, analogues of Kloosterman sheaves for a reductive group $G$ were constructed using methods from the geometric Langlands program. They have many properties which are natural generalizations of properties of Deligne's Kloosterman sheaves: 

\begin{itemize}
    \item Their slope at $\infty$ is $1/h$, where $h$ is the Coxeter number of $G$ (assuming $G$ is almost simple).
    \item The monodromy at $0$ is regular. 
    \item They are cohomologically rigid. 
\end{itemize}

When $G=\GL_n$, it was shown by Katz \cite{Katz} and Bloch--Esnault \cite{BE} that for irreducible connections the notion of cohomological and physical rigidity are equivalent. For other reductive groups, no general criteria are known to decide when cohomological rigidity coincides with physical rigidity. In fact it is known that the two notions disagree even on $\PP^1$. Already in the tame case, in \cite{KNP} the authors give an example of a cohomologically rigid $\PGL_{2}$-local system on $\PP^1\setminus \{0,1,\infty\}$ which is not physically rigid. 

In \cite[Conjecture 7.1]{HNY} it was predicted that Kloosterman sheaves for reductive groups are physically rigid. The de Rham analogue of this conjecture was recently proven in \cite{Yi} when $G$ is of adjoint type and the monodromy at $0$ is regular unipotent. The proof is based on methods from \cite{Zhu}, where it is shown that the de Rham analogues of Kloosterman sheaves coincide with the rigid irregular connection constructed by Frenkel--Gross \cite{FG}. 

More precisely, \cite{Yi} uses ideas from the geometric Langlands program. Given a $G$-connection on $\PP^1\setminus \{0,\infty\}$ which is isomorphic to Frenkel--Gross's connection formally around $0$ and $\infty$, it is shown that it is the eigenvalue of a Hecke eigensheaf which coincides with the eigensheaf for the Frenkel--Gross connection. The construction of this eigensheaf uses a version of Beilinson--Drinfeld localization and the existence of generic oper structures.

In this paper, we prove the following more general version. 

\begin{theorem}[{see Theorem \ref{thm:KloostermanPhysicalRigidity}}]\label{thm:main}
    Assume $G$ is a simple complex algebraic group and let $\cC$ be a regular conjugacy class in $G$. Moreover, assume $G$ is simply connected or $\cC$ is the regular unipotent class. Then, the Kloosterman $G$-connection with local monodromy at $0$ in $\cC$ is physically rigid.
\end{theorem}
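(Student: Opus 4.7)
My plan is to combine the Tannakian equivalence between $G$-connections and Stokes graded $G$-local systems with the braid variety interpretation of moduli of Stokes filtered $G$-local systems (both established earlier in the paper), and thereby reduce the physical rigidity of the Kloosterman $G$-connection to the statement that a certain braid variety, cut out by the formal data at $0$ and $\infty$, is a single reduced point.

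First, I would fix the formal data: at $0$, a regular singularity with monodromy in $\cC$; at $\infty$, the Stokes structure of the Kloosterman $G$-connection (slope $1/h$, with Stokes data governed by the principal $\SL_2$ embedding and the Coxeter element). Via the Tannakian equivalence, $G$-connections with this formal type correspond to Stokes graded $G$-local systems on $\PP^1\setminus\{0,\infty\}$ with the specified data, and two such connections are isomorphic if and only if the corresponding Stokes structures are. Using the braid variety description of moduli of Stokes filtered $G$-local systems, the space of such structures is represented as an affine variety $\cB_\cC$ --- essentially the fiber over $[\cC] \in G\sslash G$ of a braid variety encoding the Coxeter-type Stokes filtration at $\infty$ together with the regular monodromy condition at $0$.

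The crux is then to prove that $\cB_\cC$ consists of a single reduced point, and this is where Steinberg's cross-section enters. When $G$ is simply connected, Steinberg's section $\sigma\colon T/W \to G^{\reg}$ meets every regular conjugacy class in exactly one point, and its image is transverse to conjugation orbits on the regular locus. The braid variety at hand, built from Bruhat cells associated with a Coxeter word, should be identifiable with $\sigma$ (or an open piece of it) via the Stokes-to-braid dictionary; imposing that the monodromy at $0$ lies in the regular class $\cC$ then forces the unique point of $\sigma$ sitting in $\cC$. When $\cC$ is the regular unipotent class and $G$ is not simply connected, one lifts to the simply connected cover $G^{\sc}$: the regular unipotent class lifts to a unique regular unipotent class in $G^{\sc}$, and uniqueness descends from the previous case via the central isogeny $G^{\sc}\to G$ (which is harmless on the unipotent locus).

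The main obstacle will be the explicit identification of the relevant braid variety with Steinberg's cross-section. Concretely, one must verify that the incidence conditions imposed by the Kloosterman Stokes filtration at $\infty$ (relative positions of flags along a Coxeter word, together with compatibility with the principal grading) reproduce precisely the defining equations of $\sigma$, and then that the regular monodromy condition at $0$ is transverse to $\sigma$, cutting it in exactly one point by the uniqueness part of Steinberg's theorem that a regular element is determined up to conjugation by its image in $G\sslash G$. Getting this matching right, including a clean treatment of the non-simply-connected case via the lifting of regular unipotent classes, will be the heart of the argument.
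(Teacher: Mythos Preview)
Your proposal is essentially the paper's approach: reduce physical rigidity to showing that the moduli stack $\cM(\beta_{1/h},\cC)$ of Stokes filtered $G$-local systems with the Kloosterman irregular class at $\infty$ and monodromy in $\cC$ at $0$ is a single point, and establish this via Steinberg's cross-section for the Coxeter braid. Two points of divergence are worth noting. First, the paper does not identify the braid stack with the Steinberg section; rather, it shows $\cM(\tilde{w})\cong [\Ad(B)\backslash BwB]$ and then uses surjectivity of $t\mapsto t\,\Ad_w(t^{-1})$ on $T$ (ellipticity of the Coxeter element) to produce a \emph{surjection} $\Sigma\cap\cC\to \cM(\tilde{w},\cC)$, which suffices once $\Sigma\cap\cC$ is a point. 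Your ``identification with $\sigma$ (or an open piece of it)'' and ``transversality'' language is looser than what is actually proved. Second, for the non-simply-connected regular unipotent case, the paper does not lift to $G^{\sc}$; Steinberg's theorem already asserts that $\Sigma$ meets the regular unipotent class in a unique point for arbitrary $G$, so no isogeny argument is needed. Your lifting approach would also work here (the isogeny is bijective on unipotent elements), but it is an unnecessary detour.
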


Our methods are more elementary and completely different. We make use of the Stokes phenomenon for irregular connections. The Stokes phenomenon was first observed by Stokes in his study of Airy's equation. Roughly speaking, the asymptotic behaviour of local sectorial solutions to the Airy equation jumps when crossing certain directions around $\infty$. The jumps are described by unipotent linear transformations, nowadays known as Stokes matrices. It is fruitful to think of these data as generalized monodromy data, and they allow us to understand irregular connections in purely topological terms (as classical monodromy does for regular singular connections). 

There are several ways of encoding the Stokes phenomenon topologically, all of which yield complete invariants for meromorphic connections on Riemann surfaces:

\begin{itemize}
    \item Stokes local systems, which essentially record Stokes matrices, 
    \item Stokes-graded local systems, and
    \item Stokes-filtered local systems.
\end{itemize}
Therefore, to show physical rigidity of a connection, one may instead work with either of the above objects.

The concept of a Stokes local system was first formulated by Boalch in \cite{Boalch01} in the case where $G=\GL_n$, and it has been extended to more general groups in \cite{BoalchGbun, BoalchGeomBraiding, BY}. In fact, Stokes local systems are actual local systems on a modified Riemann surface (with boundary), and hence they can be described very explicitly in terms of ``wild monodromy representations''. Moduli spaces of Stokes local systems are called ``wild character varieties’’, yielding an elegant generalization of the classical character varieties in the framework of connections with irregular singularities.
These spaces carry a rich geometry and we refer to the literature mentioned above for full details.

On the other hand, the concept of a Stokes filtration (in the case $G=\GL_n$) was set up by Deligne and Malgrange (see \cite{Mal} for an exposition). The notion of Stokes-graded local system has been studied in \cite{BoalchTop} as an intermediate step between Stokes filtrations and Stokes local systems when $G=\GL_n$.

For $G=\GL_n$, these three approaches and their equivalence are discussed in detail in \cite{BoalchTop}. The generalization to arbitrary reductive groups should be known to experts, but is not readily available in the literature. We therefore extend part of \textit{loc.~cit.}\ by introducing categories of Stokes-filtered and Stokes-graded $G$-local systems and proving that they are equivalent to the category of $G$-connections. We take advantage of the Tannakian structure of the category of meromorphic connections on a Riemann surface to do so. The Tannakian formalism identifies this category with the category of finite dimensional complex representations of the so called \textit{wild fundamental group}, as defined by Martinet--Ramis in \cite{MR}. Stokes local systems arise as representations of the wild fundamental group. 

In the Tannakian situation there is a natural way to define ``principal'' objects for a reductive group $G$, namely one considers the category of exact tensor functors from $\Rep(G)$ into the Tannakian category in question. We give an intrinsic description of Stokes-filtered and Stokes-graded $G$-local systems, and prove that they are equivalent to the category of exact tensor functors from $\Rep(G)$ to the categories of Stokes-filtered and Stokes-graded local systems respectively. This reduces the equivalence of Stokes-filtered and graded $G$-local systems to the classical case discussed in \cite{BoalchTop}. 

We also briefly discuss Stokes $G$-local systems and their equivalence to the other notions. The fact that they classify $G$-connections is already shown in \cite[Appendix~A]{BoalchGeomBraiding} in the unramified case. The equivalence between Stokes $G$-local systems and $G$-connections in the general situation was also proven recently in \cite{HS} independently, without appealing to Stokes filtrations or gradings. 

To prove Theorem \ref{thm:main}, we take the point of view of Stokes-filtered local systems. We show that -- in our cases of interest -- their moduli spaces can be identified with quotients of braid varieties, in this form first considered in \cite{BBMY}, but originally dating back to the work of Deligne \cite{Deligne}. These braid varieties are therefore stacky variants of some special cases of the wild character varieties of \cite{BY}. This presentation of certain wild character varieties seems to be known to experts, but again is not fully documented in the literature. The translation from braid matrices to Stokes matrices is explained in detail in the $\GL_2$-case in Boalch's paper \cite{BoalchCalabi}, where wild character varieties are related to Sibuya's work \cite{Sibuya}.

The $G$-connections we work with satisfy a special condition (called \emph{isoclinic}) at their irregular singularity.  Roughly speaking, locally formally they are diagonalizable after a suitable pullback, with regular semisimple leading term. In this case, all non-zero slopes appearing in the adjoint representation are the same. The Deligne--Simpson problem for irregular connections on $\PP^1\setminus \{0,\infty\}$ with an isoclinic irregular singularity at $\infty$ and a regular singularity at $0$ has been studied in \cite{JY}. In addition, the cohomologically rigid cases are classified there. For $G=\GL_n$ and slope $d/n$, this work was preceded by \cite{KLMNS}.  

When the denominator of the slope $\nu = d/m$ (written in lowest terms) is a regular elliptic number for the group $G$, the corresponding braid is easy to describe. Up to a cyclic shift of words, it is given as $\tilde{w}^d$ for the positive braid lift $\tilde{w}$ of a regular elliptic element $w$ of order $m$ of minimal length in its conjugacy class, cf.\ \cite[\S 4.4]{BBMY}.
In the case of Kloosterman connections, this braid is simply the lift of a Coxeter element, and these braid varieties are then essentially identified with Steinberg's cross-section. The Steinberg section is a certain naturally defined subset of a reductive group, which in the simply connected case intersects every regular class in a unique point. It can be thought of as a group version of Kostant's section in Lie theory. Such ideas have already been used in the context of Stokes phenomena in works of Guest and Ho \cite{GH}, but we bypass many of their computations in our case by using the simple description of the attached braid. 

Fixing the local monodromy of a Kloosterman connection to lie in a certain regular class amounts to intersecting Steinberg's cross-section with this class, proving physical rigidity (and giving yet another construction of Kloosterman connections). 

\subsection{Generalized Kloosterman connections}
In \cite{Yun}, Yun defines an $\ell$-adic generalization of Kloosterman sheaves for any simple group $G$. They are again constructed using methods from the geometric Langlands program. This construction goes through in the de Rham setting as well and procudes an integrable connection with properties generalizing those of Kloosterman connections. The input in the construction is a regular elliptic number $m$, which is the order of a regular elliptic element $w\in W$. To any conjugacy class in the Weyl group, Lusztig has attached a unipotent class in $G$, see \cite{Lu}. We denote by $\cC_{w}$ the unipotent class attached to the regular elliptic element $w$. Generalized Kloosterman connections are algebraic connections on $\PP^1\setminus \{0,\infty\}$ which are irregular at $\infty$ with slope $1/m$ at $\infty$, and regular singular at $0$ with monodromy in the class $\cC_{w}$. Using results of He and Lusztig in \cite{HL} on generalized Steinberg sections associated to elliptic elements, we prove the following. 
\begin{theorem}[see Corollary \ref{cor:genkloost}]
Generalized Kloosterman connections are physically rigid. 
\end{theorem}
A related construction is the notion of $\theta$-connections, which are defined in \cite{Chen}, following an idea of Yun. More recently, under some mild assumptions, and for adjoint groups, Chen and Yi have proven that $\theta$-connections and generalized Kloosterman connections agree, using methods from geometric Langlands \cite{CY}. Under their assumptions, they also deduce physical rigidity. As for usual Kloosterman connections, our method gives an alternate proof, valid for any simple group $G$, and without any assumptions other than fixing the slope at $\infty$ to be $1/m$ and fixing the conjugacy class of local monodromy at $0$ to be $\cC_{w}$.  

\subsection{Airy connections} In addition to physical rigidity of Kloosterman connections, we study another well-known family of connections which generalize the classical Airy equation $y''=zy$ on $\CC$. 

For $G=\GL_n$, the $\ell$-adic analogues of Airy's equation were first studied in \cite{Katz}. In \cite{KS2}, they were uniformly defined for any simple group $G$, and more general instances are constructed in \cite{JKY}, following the method of \cite{HNY}. The most general construction can be found in \cite{JY}. Airy $G$-connections are algebraic $G$-connections on $\AA^1$ with slope $1+1/h$ at $\infty$. In this paper, we prove the following property. 

\begin{theorem}[{see Theorem \ref{thm:AiryPhysicalRigidity}}]
Airy $G$-connections are physically rigid. 
\end{theorem}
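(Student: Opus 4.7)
The plan is to apply the Stokes-filtered/braid-variety strategy outlined above for Kloosterman connections, adapted to the Airy setting. An Airy $G$-connection lives on $\AA^1$ with a unique singularity, at $\infty$, which is isoclinic of slope $\nu=(h+1)/h$. Since the denominator $h$ is the Coxeter number, hence an elliptic regular number for the Weyl group, the general dictionary invoked for Kloosterman (cf.\ \S4.4 of \cite{BBMY}) assigns to this formal type, up to cyclic rotation, the braid $\tilc^{\,h+1}$, where $\tilc$ is the positive braid lift of a Coxeter element of minimal length. The moduli space of Stokes filtered $G$-local systems with prescribed Airy formal type is therefore a quotient of the braid variety $X(\tilc^{\,h+1})$ by the residual gauge group at $\infty$, and physical rigidity reduces to showing that this quotient is a single point. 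Unlike the Kloosterman case, there is no singularity at $0$ to impose a regular conjugacy-class condition; instead, the full Borel framing at $\infty$ acts as gauge, and the expected dimension is zero because $\tilc^{\,h+1}$ has length $(h+1)r=\dim G$.

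To carry out the argument, I would exploit the braid-group identity $\tilc^{\,h}=\Delta^2$ (the full twist generating the centre of the Artin braid group) to factor $\tilc^{\,h+1}=\Delta^2\cdot\tilc$. In the chain-of-Borels model underlying $X(\tilc^{\,h+1})$, the $\Delta^2$-factor corresponds to a complete loop of Borel positions that is absorbed by the Borel gauge action at $\infty$, leaving only a single $\tilc$-block. On this block, a Steinberg cross-section argument---applied now in the absence of any conjugacy-class constraint at $0$---identifies the remaining quotient with a single point, yielding physical rigidity.

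The main obstacle will be to make precise the matching between the Borel gauge action at $\infty$ and the $\Delta^2$-portion of the braid word, ensuring that no residual parameters survive. Should this decomposition prove technically awkward, a fallback is a direct transversality argument: non-emptiness of the quotient is immediate from the explicit constructions of Airy connections in \cite{JKY, JY}, while uniqueness can be deduced from a dimension-plus-regularity count exploiting the elliptic structure underlying the Coxeter braid.
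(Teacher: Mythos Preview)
Your overall setup is right: the relevant braid is $\tilc^{\,h+1}$, the identity $\tilc^{\,h}=\Delta^2$ is the key algebraic input, and one must show the resulting moduli stack (with trivial monodromy, since we are on $\AA^1$) is a point. But the reduction you sketch has a genuine gap.

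First, the gauge group acting on $\cM^{\sharp}(\beta_\nu,1)=\tilde X(\tilc^{\,h+1},1)$ is $G$, not a Borel. You cannot absorb the full $\Delta^2$ by this action. What the paper actually does is remove \emph{one} factor of $\Delta$, via the identification $\tilde X(\beta'\Delta,1)\cong \tilde X(\beta',w_0)$ with $\beta'=\tilc\,\Delta$; this replaces the closed-loop condition $B_{r+1}=B_1$ by the condition that $B_1$ and the last Borel are opposite. The $G$-action is then spent fixing this opposite pair to $(B,B^{\opp})$, and the residual gauge is the stabilizer $T$, not the trivial group. So the honest reduction is to $[T\backslash X(\tilc\,\Delta)]$, a $T$-quotient of an $r$-dimensional variety, not to a bare $\tilc$-block.

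Second, even if you had reduced to a single $\tilc$-block, Steinberg's cross-section would not finish the argument: the Steinberg section attached to a Coxeter element is itself $r$-dimensional, and only becomes a point after intersecting with a fixed regular conjugacy class. That constraint is exactly what is \emph{absent} in the Airy case (no singularity at $0$), so invoking Steinberg here is circular.

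The actual endgame, which your ``fallback'' gestures at but does not supply, is: $X(\beta')=X(\tilc\,\Delta)$ is smooth and irreducible of dimension $\ell(\beta')-\ell(w_0)=r=\dim T$ (since $\Dem(\beta')=w_0$), and one checks that the $T$-action has finite stabilizers because $\pi(\beta')w_0^{-1}=c$ is elliptic. Irreducibility plus the dimension match then forces the $T$-action to be transitive, so the quotient is $[S\backslash \pt]$ for a finite group $S$. This finite-stabilizer lemma (ellipticity $\Rightarrow$ $T^{c}$ finite $\Rightarrow$ finite stabilizers) is the substantive step you are missing; the Steinberg section plays no role.
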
 

The proof again relies on the identification of moduli spaces of Stokes-filtered local systems with (quotients of) braid varieties. We remark that the methods of \cite{Yi} cannot be applied in this situation. The issue is that the corresponding moduli space of bundles with level structure on which the Hecke eigensheaf is defined is not as well-behaved as in the Kloosterman case.

Our proof uses mostly basic properties of braid varieties. In the Airy case they are smooth, irreducible, and have explicit dimension equal to the rank of the group. The moduli space of Airy connections is then identified with the quotient of a braid variety by the natural action of a maximal torus which acts transitively with finite stabilizer, proving physical rigidity.

\subsection{Organization of the paper} In Section \ref{s:setting}, we introduce the basic setting and notation. In Section \ref{s:stokes}, we recall basic facts about the Stokes phenomenon and introduce Stokes filtrations and Stokes gradings for general reductive groups $G$. We prove the equivalence of Stokes-filtered and Stokes-graded $G$-local systems with $G$-connections. In Section \ref{s:braids}, we introduce the moduli stack of Stokes-filtered $G$-local systems and relate it to braid varieties in special cases.
In Section \ref{s:rigidity}, we recall the definition of Kloosterman connections, basic facts about the Steinberg section, and we prove physical rigidity of Kloosterman $G$-connections, as well as physical rigidity for generalized Kloosterman connections. We then go on to defining Airy $G$-connections and proving their physical rigidity. 

\subsection*{Acknowledgements}

We thank Philip Boalch for helpful comments on an earlier version of this paper, and in particular for pointing out an alternative approach to the rigidity of Kloosterman connections, described in Remark~\ref{rem:RigidityWCV}. KJ thanks Ian Le for answering questions on braid varieties, and Pengfei Huang, and Zhiwei Yun for helpful discussions.

\section{Setting and notation}\label{s:setting}
\subsection{$G$-local systems}
Let $X$ be a topological space, and let $\cG$ be a sheaf of groups on $X$. By a $\cG$-torsor on $X$ we mean a sheaf of sets $\cF$ on $X$ with a (right) action $\cF \times \cG \to \cF$ such that 
\begin{itemize}
    \item[(1)] there exists an open covering $X=\bigcup_{i\in I} U_i$ such that $\cF(U_i)\neq \emptyset$ for each $i\in I$, and
    \item[(2)] for each open $U\subseteq X$ with $\cF(U)\neq\emptyset$, the action $\cF(U)\times\cG(U)\to\cG(U)$ is free and transitive.
\end{itemize}

\begin{defn}
    Let $G$ be a group. A $G$-local system $\cF$ on $X$ is a torsor for the constant sheaf $G$ on $X$. 
\end{defn}
Passing to the espace étalé $E_{\cF}$ of $\cF$, this is the same as a local homeomorphism $\pi \colon E_{\cF} \to X$, every fiber of which is a principal homogeneous $G$-space (it carries a free and transitive $G$-action). From now on we will use both viewpoints without changing notation.

From now on, we let $G$ be a connected complex reductive algebraic group, and we denote by $\Rep(G)$ the category of complex algebraic representations of $G$. Given any $G$-local system $\cF$ on $X$, and any representation $(\pi,V)$ of $G$, we can form the associated local system 
\[\cF \times^{G} V = (\cF \times V)/G\]
where the $G$-action on $\cF\times V$ is given by $(x,v).g = (xg, \pi(g^{-1})v)$. This is a local system of complex vector spaces on $X$ with stalk $V$. We sometimes denote the associated local system by $\pi(\cF)$ or $\cF^{V}$. 

Let $X$ be a path-connected, locally path-connected and semi-locally simply connected topological space. We will sometimes refer to this as a well-behaved topological space. Recall the following facts about $X$.

The category $\Loc(X)$ of local systems of complex vector spaces on $X$ is a neutral Tannakian category (cf. \cite{DM} for details on this notion). The choice of any point $x \in X$ defines a fiber functor by taking the stalk. Denote by $\pi_1^{\otimes}(X,x)$ the Tannakian fundamental group of $X$. It is naturally identified with the pro-algebraic completion of the usual fundamental group $\pi_1(X,x)$. 

Moreover, for a complex reductive group $G$, by \cite[\S 6]{Sim1} the groupoid of $G$-local systems can be identified with the groupoid of exact faithful tensor functors 
\[ \Rep(G) \to \Loc(X), \]
where $\Rep(G)$ denotes the category of complex finite dimensional algebraic representations of $G$.
For a $G$-local system $\cF$ on $X$, the corresponding tensor functor is given by the associated local system construction.

\subsection{The group $G$ and related data}
From here on, until otherwise mentioned, we will fix the following notation and data. Let $G$ be a complex reductive group. We fix a maximal torus $T$ in a Borel subgroup $B$ of $G$ throughout. The corresponding Lie algebras will be denoted by $\frt\subset \frb\subset \frg$, so that $\frt$ is a Cartan subalgebra of $\frg$, and $\frb \subset \frg$ is a Borel subalgebra. Then, we have the Weyl group $W=W_T=N_{G}(T)$, defined with respect to $T$.

We denote by $\cB$ the variety of Borel subgroups of $G$ (or the variety of Borel subalgebras of $\frg$). It is isomorphic to the flag variety $G/B$. Attached to $G$ we have the universal Weyl group $(\WW,S)$ with simple reflections $S$. The group $G$ acts diagonally on $\cB \times \cB$ via its adjoint action, and we have a natural bijection
\begin{align}
    \WW \overset{\textup{1:1}}{\longleftrightarrow}\{G\textup{-orbits on } \cB \times \cB \}.
\end{align}
For $w\in \WW$, we denote the corresponding $G$-orbit by $\cO(w)$. Two Borel subgroups $B_1,B_2\subset G$ are said to be in relative position $w$ if $(B_1,B_2)\in \cO(w)$. In this case, we will sometimes write $B_1 \xrightarrow{w} B_2$. Concretely, our choice of $T\subset B$ identifies $W_T \cong \WW$ and $\cB \cong G/B$, and under this identification, $\cO(w)$ is the orbit of $(B,wB)$ in $G/B\times G/B$. Then, $xB \xrightarrow{w} yB$ if and only if $x^{-1}y \in BwB$.

Recall that a standard parabolic subgroup of $G$ is a parabolic subgroup containing the fixed Borel subgroup $B$. Each conjugacy class of parabolic subgroups contains a unique standard parabolic. They are in bijection with proper subsets $J$ of the positive roots, therefore each standard parabolic subgroup determines a parabolic subgroup $\WW_{J}$ in the abstract Weyl group $\WW$. Let $P_{J},P_{J'}$ be two standard parabolic subgroups, and denote by $\cP_{J}$ (resp.\ $\cP_{J'}$) the varieties of parabolic subgroups conjugate to $P_J$ (resp.\ $P_{J'}$). The set of diagonal $G$ orbits on $\cP_{J} \times \cP_{J'}$ is in bijection with $ \WW_{J} \backslash \WW / \WW_{J'}$. 

\subsection{The base curve}
We denote by $\Sigma$ a smooth complex algebraic curve. For a finite set of points $\ba \subset \Sigma$ we denote by $\pi\colon \wdS \to \Sigma$ the real oriented blow-up of $\Sigma$. We let 
\[\partial=\pi^{-1}(\ba) = \bigcup_{a\in \ba} \partial_{a}. \]
Sometimes we will assume $\ba$ to be a singleton to simplify notation. The general case is easily deduced.

\section{Stokes filtrations and Stokes gradings}\label{s:stokes}
Most of what we cover in this section is well-known to experts, but some parts are not readily available in the literature. We include this section for clarification and convenience of the reader.

In particular, the general framework for Stokes data over a complex reductive group $G$ (as presented in \S 3.1 and \S 3.2) has been set up in \cite{BY}.

\subsection{Exponential local system and irregular classes}
Assume $\ba=\{a\}$ so that $\partial$ is a single circle. Let $z$ be a local coordinate vanishing at $a$. The exponential local system $\cI\to \partial$ is the local system of sets whose local sections are (Germs on small sectors of) rational functions of the form
\[ q = \sum_{i=1}^{r} a_i z^{-k_{i}}\]
for some $r\in\ZZ_{>0}$, some $k_{i}\in \QQ_{>0}$ and $a_{i}\in \CC$ and for some fixed choice of a branch of $\log(z)$. We refer to \cite[\S 5.1]{BoalchTop} for details. We denote by $\cT$ the local system of pro-tori whose character group at each direction $d\in \partial$ is the free abelian group $\cI_{d}$. 
\begin{defn}[{\cite[Definition~5]{BY}}]
    An $\cI$-graded $G$-local system on $\partial$ is a $G$-local system $\cF$ together with a morphism of local systems of groups $\cT\to \Aut(\cF)$ factoring over an algebraic quotient of $\cT$. 
\end{defn}

Recall that two $\cI$-graded $G$-local systems on $\partial$ are said to have the same irregular class if they are locally isomorphic as graded local systems on $\partial$. By \cite[\S 3.5]{BY}, an irregular class $\Theta$ is given by a quotient $T_d$ of $\cT_{d}$ for any $d\in \partial$, and a conjugacy class of embeddings $T_d\hookrightarrow G$.

\begin{lemma}
    An irregular class is determined by a conjugacy class of morphisms $\cT_{d}\to G$.
\end{lemma}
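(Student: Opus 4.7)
The plan is to exhibit mutually inverse maps between the set of irregular classes and the set of $G$-conjugacy classes of (pro-algebraic) morphisms $\phi\colon \cT_d\to G$ factoring through an algebraic quotient of $\cT_d$. In one direction, an irregular class, i.e.\ a quotient $q\colon \cT_d\twoheadrightarrow T_d$ together with a conjugacy class $[\iota]$ of embeddings $\iota\colon T_d\hookrightarrow G$, maps to the conjugacy class of $\iota\circ q$. In the other direction, given $\phi\colon \cT_d\to G$, I factor it as $\cT_d\twoheadrightarrow \phi(\cT_d)\hookrightarrow G$ and declare $T_d\defeq \phi(\cT_d)$ together with the tautological embedding.

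The first key step is to check that these constructions are well-defined on conjugacy classes. In the forward direction this is immediate, since conjugating $\iota$ by $g\in G$ conjugates $\iota\circ q$ by $g$. In the backward direction, because $\cT_d$ is abelian, $\ker(\phi)$ is insensitive to replacing $\phi$ by $g\phi g^{-1}$, so the quotient $T_d=\cT_d/\ker(\phi)$ is canonically determined, while the embedding $\iota$ gets conjugated by $g$; hence the output pair $(T_d,[\iota])$ depends only on $[\phi]$. The second key step is verifying that the two constructions are mutually inverse, which is essentially tautological: on one side, $\iota\circ q$ recovers the kernel and image data of $(T_d,[\iota])$ because $\iota$ is injective; on the other side, the reconstruction of $\phi$ from its image factorization is trivially $\phi$ itself.

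The main subtlety, and the step I would devote the most care to, is verifying that the image of a morphism $\phi\colon \cT_d\to G$ in the relevant category is indeed an (algebraic) torus of $G$, so that the pair $(\phi(\cT_d),\iota)$ genuinely gives irregular class data as defined in \cite[\S 3.5]{BY}. By hypothesis, $\phi$ factors through an algebraic quotient of $\cT_d$, which is a torus since $\cT_d$ is pro-diagonalizable. The image of a torus under a morphism of algebraic groups is a torus (being connected, commutative, and consisting of semisimple elements), so $\phi(\cT_d)\subseteq G$ is a subtorus, as required.

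Once these points are settled, the lemma follows, and the formulation in terms of conjugacy classes of morphisms $\cT_d\to G$ can be used interchangeably with the description of irregular classes given before the lemma.
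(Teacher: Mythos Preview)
Your proposal is correct and takes essentially the same approach as the paper: both show that a conjugacy class of morphisms $\cT_d\to G$ determines a well-defined algebraic quotient of $\cT_d$ together with a conjugacy class of embeddings. The paper phrases the well-definedness of the quotient dually, by checking that the pullbacks $j^*,(j')^*\colon X^*(T_d)\to X^*(\cT_d)$ have the same image, whereas you argue directly that $\ker(\phi)=\ker(g\phi g^{-1})$; these are equivalent for (pro-)tori, and your version is arguably more transparent (note, incidentally, that the abelianness of $\cT_d$ is not needed for this kernel equality---conjugation in $G$ fixes the identity). Your explicit setup of the two-sided inverse and the verification that $\phi(\cT_d)$ is a genuine subtorus are points the paper leaves implicit.
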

\begin{proof}
Given two maps $j,j'\colon\cT_{d} \to G$ which are conjugate, say $j'=gjg^{-1}$ for some $g\in G$, we denote by $T_d$ and $T'_{d}$ the images of $\cT_d$ under $j$ and $j'$, respectively. The map $\l \mapsto {}^{g}\l$ with ${}^{g}\l(t') = \l(g^{-1}t'g)$ gives an isomorphism $X^*(T_d) \to X^*(T_d')$. It is easy to check that the embeddings $j^*\colon X^*(T_d) \to X^*(\cT_{d})$ and $(j')^*\colon X^*(T'_d) \to X^*(\cT_{d})$ have the same image, hence determine a well-defined algebraic quotient of $\cT_{d}$. 
\end{proof}

We say that two parabolic subgroups of $G$ are associates (or associated) if their Levi factors are conjugate. By an associate class we mean a conjugacy class of Levi subgroups of $G$. Given an irregular class $\Theta$, the centralizer of the image of any representative $\cT_d \to G$ is a Levi subgroup. In this way, an irregular class $\Theta$ determines an associate class $\bd(\Theta)$ for $G$, such that the center of each Levi subgroup in $\bd(\Theta)$ is conjugate to $T_d$. 
\begin{defn}
    We say that an irregular class $\cT_{d}\to G$ is \emph{toral} if the associate class $\bd(\Theta)$ is the conjugacy class of maximal tori. 
\end{defn}

\begin{remark}
    For $G=\GL_n\defeq \GL_n(\CC)$, we can make this more explicit. The irregular class is equivalent to the dimension vector determined by $\Theta$, i.e.\ a map $\pi_0(\cI)\to \ZZ_{\ge 0}$, by \cite[Proposition 8]{BY}.
    \end{remark}

\subsection{Stokes directions, singular directions, and the exponential dominance order}\label{sec:stokesdir}

Let $p=[q]\in \cI$ be a point on the exponential covering space (the étalé space of the exponential local system), i.e.\ a germ of a function $q(z)=\sum_{i=1}^r a_i z^{-k_i}$ (for some choice of branch of $\log z$, $r\in \ZZ_{>0}$, $a_i\in \CC$ and $k_i\in\QQ_{>0}$) on a small sector around the direction $d=\pi([q])\in \partial$. Then $p$ is \textit{oscillatory} if and only if $\textup{Re}(q(z)) = \textup{Re}(q(re^{i\theta})) \to 0$ for $\theta=d$ and $r\to 0$. On the other hand, $p$ is \textit{a point of maximal decay}, if $\exp(q)$ has maximal decay as $z\to 0$ along $d$. Equivalently, the monomial in $q$ of highest pole order is real and negative as $z\to 0$ along $\theta=d$. 

Fix an irregular class $\Theta$ for now. For any $d\in \partial$ let $T_d$ be the algebraic quotient of $\cT_{d}$ determined by $\Theta$, with corresponding finite rank sublattice $I_d\subset \cI_{d}$.

Choose an embedding $T_d \hookrightarrow G$ corresponding to the irregular class $\Theta$, well-defined up to conjugacy. Then, for any $d\in\partial$, write $\frg = \bigoplus_{\lambda \in I_d} \frg_{\lambda}$. While this decomposition itself is not well-defined, the set 
\[\Lambda_{d} = \{ \lambda \in I_{d} \mid \frg_{\l} \neq 0 \}\]
only depends on the conjugacy class of $T_d \hookrightarrow G$. 
\begin{defn}The set of \textit{Stokes directions} $\SS=\SS_{\Theta}$ of $\Theta$ is the set of $d \in \partial$ such that there is a non-zero oscillatory $\lambda$ with $\frg_{\lambda} \neq 0$.

The set of \textit{singular directions} (or anti-Stokes directions) $\AA=\AA_{\Theta}$ of $\Theta$ is the set of $d \in \partial$ such that there is a non-zero $\lambda$ which is a point of maximal decay with $\frg_{\lambda} \neq 0$.
\end{defn}

The set of Stokes directions and the set of singular directionds depends only on the irregular class $\Theta$.

\subsection{Positive and negative weights of $\Theta$} \label{ss:posnegweights} Recall that we fixed an irregular class $\Theta$. Each fiber of $\cI$ carries a partial ordering by exponential dominance. Explicitly, for $d\in \partial$ and $q_1,q_2 \in \cI_{d}$ we write $q_1 \le_{d} q_2$ if and only if $q_1=q_2$ or $\exp(q_1-q_2)$ has zero asymptotic expansion on an open sector around $d$.  In other words, if $q_1 \neq q_2$, the monomial of highest pole order of $q_1-q_2$ has negative real part along $d$.

By definition of the Stokes directions,  for any $d\in \partial \setminus \SS$, the exponential dominance ordering  induces a disjoint decomposition of the non-zero weights 
\[ \Lambda_{d} \defeq \{\l\in I_{d}\setminus \{0\} \mid \frg_{\l}\neq 0 \}
\]
into a positive and negative part $\Lambda_{d}\setminus \{0\} = \Lambda_{d}^+ \sqcup \Lambda_{d}^{-}$, determined by whether the real part of the monomial of highest pole order of $\lambda$ is positive or negative along $d$. Note that both parts of the decomposition are exchanged by $\l \mapsto -\l$.

Moreover, this decomposition satisfies the following two conditions:
\begin{enumerate} 
    \item For any $\l \in \Lambda_{d}$, either $\l\in \Lambda_{d}^{+}$ or $-\l\in \Lambda_{d}^{+}$.
    \item If $\l,\mu \in \Lambda_{d}^{+}$ and $\l+\mu \in \Lambda_{d}$, then $\l+\mu \in \Lambda_{d}^{+}$.
\end{enumerate}

Assume $d \in \AA$ is a singular direction. There is a partial ordering $\prec_d$ on $\L_{d}=\L_{\Theta,d}$ defined by $q_1 \prec_d q_2$ if and only if $q_1-q_2$ is a point of maximal decay. It is clear that $\prec_d$ is refined by exponential dominance. 

\begin{remark}
    Our notation differs slightly from the notation in \cite{BoalchTop}.
    For $G=\GL_n$, Boalch denotes by $I_d$ the set of active exponents, which is finite. These are the weights in $\cI_{d}$ that appear in the standard representation. Outside $\SS$, they carry a total order. In our notation, just as in \cite[Appendix~A]{BY}, $I_{d}$ is the lattice generated by the active exponents (for $G=\GL_n$).

    For a singular direction $d$, the partial order on the active exponents induced by $\prec_d$ on $\L_{d}$ is equivalent to giving the Stokes arrows in \cite[\S 5.5]{BoalchTop}.
\end{remark}

\subsection{Stokes conditions}
Given a vector space with a grading by some finitely generated abelian group, an ordering of this group determines a filtration of the vector space. In our more general situation we have the following result. 

Let $S \subset G$ be a torus. It acts on $\frg$ via the adjoint representation, and we denote by $\frg_{\r}$ the weight space for $\r \in X^*(S)$. Let $\L_S \subset X^*(S)$ be the subset of weights $\r$ for which $\frg_{\r}\neq 0$.

\begin{defn}
A \textit{set of positive weights for} $S$ is a subset $\Lambda_{S}^{+} \subset \Lambda_{S}\setminus \{0\}$ such that, setting $\Lambda_S^-\defeq \Lambda_S\setminus (\Lambda_S^+\cup\{0\})$, it induces a disjoint decomposition $\Lambda_{S}\setminus \{0\}=\Lambda_S^+\sqcup \Lambda_S^-$ into positive and negative weights satisfying the conditions:
\begin{enumerate} 
    \item For any $\l \in \Lambda_{S}$, either $\l\in \Lambda_{S}^{+}$ or $-\l\in \Lambda_{S}^{+}$.
    \item If $\l,\mu \in \Lambda_{S}^{+}$ and $\l+\mu \in \Lambda_{S}$, then $\l+\mu \in \Lambda_{S}^{+}$.
\end{enumerate} 
\end{defn}

Any such choice of $\Lambda_{S}^{+}$ determines a parabolic subgroup with Lie algebra $\frp = \bigoplus_{\rho\in\Lambda_S^+\cup \{0\}} \frg_{\r}$ of $\frg$ (and similarly for $\Lambda_{S}^{-}$). To see this, we may choose a maximal torus $T\subset G$ containing $S$. We let $\Phi$ be the set of roots of $G$ with respect to $T$, and let 
\[\Phi_{\ge 0} = \{\a \in \Phi \mid \a|_{S} \in\Lambda_S^+\cup \{0\} \}.\]
Similarly, $\Phi_{\le 0}$ is defined via $\Lambda_S^+$. Then clearly $\Phi_{\ge 0}$ and $\Phi_{\le 0}$ satisfy the following conditions:
\begin{enumerate}
    \item For every root $\alpha\in \Phi$, $\alpha \in \Phi_{\le 0}$ or $-\alpha \in \Phi_{\ge 0}$ (or both), and
    \item if $\alpha, \beta\in \Phi_{\ge 0}$ are such that $\alpha +\beta$ is a root, then $\alpha+\beta \in \Phi_{\ge 0}$.
\end{enumerate}
This means $\Phi_{\ge 0}$ is a parabolic subset of $\Phi$ in the sense of \cite[Definition 2.2.6]{CGP}, so $\frp$ is a parabolic subalgebra.

\begin{defn}
    Given a set of positive (resp.\ negative) weights $\Lambda_{S}^{+}$ (resp. $\Lambda_{S}^{-})$, we denote the corresponding parabolic subgroup by $P_{G}(\Lambda_{S}^{+})$ (resp.\ $P_{G}(\Lambda_{S}^{-})$).
\end{defn}

Let $S$ be a torus, and $\{S \hookrightarrow G\}/\Ad(G)$ a conjugacy class of embeddings of $S$ into $G$. Then the set of weights of the adjoint representation of $S$ on $\frg$ is well-defined. Let $\Lambda_{S}^{+} \subset \Lambda_{S}$ be a set of positive weights and choose an embedding $\g \colon S \hookrightarrow G$. Then by the above discussion, $\Lambda_{S}^{+}$ determines a parabolic subgroup $P_{G}(\Lambda_{S}^{+},\g)$ depending on the embedding $\g$. 

Assume we are given two sets of positive weights $\Lambda_{S,1}^{+}$ and $\Lambda_{S,2}^{+}$. 

\begin{defn}[Stokes condition for filtrations] Let $P,Q$ be two parabolic subgroups in $G$ in the same associate class.  We say that $P$ and $Q$ satisfy the \textit{Stokes condition} (for $\Lambda_{S,1}^{+}$ and $\Lambda_{S,2}^{+}$) if there is an embedding $\g \colon S \hookrightarrow G$ such that $P=P_{G}(\Lambda_{S,1}^{+},\g)$ and $Q=P_{G}(\Lambda_{S,2}^{+},\g)$.
\end{defn}
Again, when $G=\GL_n$, this coincides with the Stokes conditions in \cite[Definition 3.9]{BoalchTop}. 

\begin{lemma}\label{lem:relpos}
In the above situation, let $\WW_{P}$ and $\WW_{Q}$ be the parabolic subgroups in $\WW$ corresponding to the unique standard parabolic conjugate to $P$ and $Q$, respectively. There is a well-defined Weyl group element $w \in \WW_{P} \backslash \WW / \WW_{Q}$, determined uniquely by the pair $(\Lambda_{S,1}^{+},\Lambda_{S,2}^{+})$ and the conjugacy class of $S \hookrightarrow G$, such that the Stokes condition for $P,Q$ is equivalent to $P$ and $Q$ having relative position $w$ (meaning they lie in the orbit in $G/P \times G/Q$ labelled by $w$).
\end{lemma}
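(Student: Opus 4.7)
The plan is to fix an embedding $\gamma$ in the given conjugacy class, extract $w$ from the pair of parabolics it produces, and then verify both independence of the choice of $\gamma$ and the asserted equivalence.

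First, I would pick any embedding $\gamma\colon \cS \to G$ in the conjugacy class, with image $S_\gamma \subset G$, and set $P_0 \defeq P_G(S_\gamma, \le_1)$ and $Q_0 \defeq P_G(S_\gamma, \le_2)$, using Lemma-Definition~\ref{lem:orderpar}. Both of these contain the Levi $Z_G(S_\gamma)$, which lies in $\bd$; they have well-defined parabolic types $J, J'$, so $(P_0, Q_0) \in \cP_J \times \cP_{J'}$. Under the bijection $\WW_J \backslash \WW / \WW_{J'} \longleftrightarrow \{G\text{-orbits on } \cP_J\times \cP_{J'}\}$ recalled in \S\ref{s:setting}, the $G$-orbit of $(P_0, Q_0)$ determines a unique double coset, which I would call $w$.

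Next, I would check that $w$ (and the types $J, J'$) do not depend on the choice of $\gamma$. Any other embedding in the conjugacy class has the form $\gamma' = \Ad_g \circ \gamma$ for some $g \in G$, and since $\Ad_g$ sends the $\rho$-weight space of $\frg$ for $\gamma$ to the $\rho$-weight space for $\gamma'$, we get $P_G(S_{\gamma'}, \le_i) = g\,P_G(S_\gamma, \le_i)\,g^{-1}$. Thus $(P_0', Q_0')$ lies in the same $G$-orbit as $(P_0, Q_0)$ inside $\cP_J \times \cP_{J'}$, yielding the same $w$ (and matching types).

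Finally, unpacking the Stokes condition on $(P, Q)$: it asks for an embedding $\tilde\gamma$ in the conjugacy class making $P = P_G(S_{\tilde\gamma}, \le_1)$ and $Q = P_G(S_{\tilde\gamma}, \le_2)$. As $\tilde\gamma$ ranges over the conjugacy class, the pairs $(P_G(S_{\tilde\gamma}, \le_1), P_G(S_{\tilde\gamma}, \le_2))$ sweep out precisely the $G$-orbit of $(P_0, Q_0)$ by the previous paragraph. Hence the Stokes condition on $(P, Q)$ is equivalent to $(P, Q) \in \cO(w)$, i.e.\ to $P$ and $Q$ being in relative position $w$.

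There is no real obstacle; the argument is essentially book-keeping showing that $G$-conjugation acts compatibly on embeddings and on their associated parabolics. The only delicate point is verifying the identity $P_G(\Ad_g\circ\gamma, \le) = g\,P_G(\gamma, \le)\,g^{-1}$, which in turn is immediate from the transformation rule for the weight-space decomposition of $\frg$.
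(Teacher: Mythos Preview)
Your proof is correct and follows essentially the same approach as the paper's: pick a representative of the conjugacy class, define $w$ as the relative position of the resulting pair of parabolics, and observe that changing the representative conjugates both parabolics simultaneously, so $w$ is well-defined and the Stokes condition amounts exactly to lying in that $G$-orbit. The paper's proof is simply a terser version of what you wrote.
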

\begin{proof} Any representative $S \hookrightarrow G$ determines two parabolic subgroups corresponding to $\Lambda_{S,1}^{+}$ and $\Lambda_{S,2}^{+}$. We let $w \in \WW_{P} \backslash \WW / \WW_{Q}$ be their relative position. Choosing another representative amounts to simultaneous conjugation of the two parabolics. Therefore, $w$ is well-defined. It is clear that the statement holds for this $w$. 
\end{proof}

We wish to define a suitable Stokes condition for gradings as well. Abstractly, we are in the following situation. 

We call two embeddings $\g_1, \g_2 \colon S \hookrightarrow G$ \textit{compatible} if their images are conjugate, and if there is a choice of positive weights $\L^{+}_{S}$ such that $P_{G}(\L^{+}_{S},\g_{1}) = P_{G}(\L^{+}_{S},\g_{2})$. 

\begin{remark}\label{rem:wildmonodromy}
    The discussion in \cite[\S 3.4]{BoalchTop} readily generalizes to our setting. Let $P$ be a parabolic subgroup of $G$ with unipotent radical $U$. Denote by $\textup{Splits}(P)$ the set $\{\g\colon S \to G \mid P_G(\L^{+}_{S},\g)=P \}$. Then $\textup{Splits}(P)$ is a principal homogeneous $U$-space (for the conjugation action). Thus, there is a preferred unipotent element $g=g(\g_1, \g_2)$ in $G$ which conjugates $\g_1(S)$ into $\g_2(S)$. (In fact, this element only depends on $\gamma_1$ and $\gamma_2$, not on the choice of $\L^{+}_{S}$.) Following \textit{loc.~cit.}, we call this element the \emph{wild monodromy}.
\end{remark}

Assume now that in addition, we are given a partial order $\prec$ on $\L_S$, and that $\g_1, \g_2\colon S \to G$ are compatible. Moreover, let 
\[\Lambda_{S}^{\prec,+} = \{\lambda\in \L_{S} \mid 0 \prec \lambda \}.\]

\begin{defn}[Stokes condition for gradings]\label{defn:stokesgrcond}

    We say that two compatible maps $\g_1,\g_2$ satisfy the \textit{Stokes condition} if we have $P_{G}(\L^{+}_{S},\g_{1}) = P_{G}(\L^{+}_{S},\g_{2})$ for any set of positive weights $\L^{+}_{S}$ containing $\Lambda_{S}^{\prec,+}$.
\end{defn}

\begin{lemma}
If the preorder $\prec$ is empty, the Stokes conditions imply $\g_1(S)=\g_2(S)$.
\end{lemma}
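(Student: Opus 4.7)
The plan is to use the wild monodromy element from Remark~\ref{rem:wildmonodromy} together with a Lie algebra intersection argument. When $\prec$ is empty every total ordering on $\Lambda_S$ extends it, so the Stokes condition in Definition~\ref{defn:stokesgrcond} forces $P_G(S_1,\le) = P_G(S_2,\le)$ for \emph{every} total order $\le$ on $\Lambda_S$. I will exploit this to show the unipotent element conjugating $S_1$ to $S_2$ must be trivial.

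First, by Remark~\ref{rem:wildmonodromy}, associated to the pair $(\gamma_1,\gamma_2)$ there is a single unipotent element $g = g(\gamma_1,\gamma_2) \in G$, independent of any chosen ordering, with $g S_1 g^{-1} = S_2$, and such that whenever $\le$ is a total order with $P_G(S_1,\le) = P_G(S_2,\le) = P$, the element $g$ lies in the unipotent radical $U_\le$ of $P$. Since all total orders are allowed under our hypothesis, we conclude
\[
g \in \bigcap_{\le} U_\le,
\]
the intersection being over all total orders on $\Lambda_S$.

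Next, I would compute this intersection. Each $U_\le$ is a connected unipotent subgroup of $G$, so it is determined by its Lie algebra $\mathrm{Lie}(U_\le) = \bigoplus_{\rho < 0} \mathfrak{g}_\rho$, where the sum is over nonzero $\rho \in \Lambda_S$ with $\rho < 0$ under $\le$. The key point is that for any nonzero $\rho \in \Lambda_S$, there exists a total order making $\rho > 0$ (e.g.\ start with any total order on $\Lambda_S \otimes \mathbb{R}$ given by a generic linear functional, and flip it if necessary). Hence $\bigcap_\le \mathrm{Lie}(U_\le) = 0$, and since each $U_\le$ is connected unipotent, $\bigcap_\le U_\le = \{1\}$.

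Combining these steps gives $g = 1$, and therefore $S_1 = g S_1 g^{-1} = S_2$, as desired. The only place where care is needed is verifying the existence of total orders achieving each sign on a given nonzero weight; this is a standard fact about orderings on finitely generated torsion-free abelian groups (pass to $\Lambda_S \otimes \mathbb{R}$ and use a generic linear functional), so I do not expect any real obstacle.
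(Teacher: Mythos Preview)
Your proof is correct and follows essentially the same approach as the paper: both use the wild monodromy element $g$ from Remark~\ref{rem:wildmonodromy} and show it lies in enough unipotent radicals to force $g=1$. The paper is slightly more economical, observing that a single order $\le$ together with its opposite $\le^{\mathrm{op}}$ already yields two opposite parabolics whose unipotent radicals intersect trivially, whereas you intersect over all orders; the underlying idea is the same.
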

\begin{proof}
Let $\L^{+}_{S}$ be any set of positive weights for which $P_{G}(\L^{+}_{S},\g_{1}) = P_{G}(\L^{+}_{S},\g_{2})$ (it exists by compatibility). Then, since $\prec$ is empty, we also have $\Lambda_{S}^{\prec,+}\subset \L^{-}_{S}$. Therefore, the wild monodromy $g$ lies in the unipotent radical of two opposite parabolics, hence must be trivial. 
\end{proof}

\subsection{Stokes filtrations and Stokes gradings}
In this section we define a generalization of Stokes-filtered and Stokes-graded local systems as defined in \cite[\S\S 6--7]{BoalchTop}. We will assume for simplicity of notation that $\ba$ is a singleton, so $\wdS$ has a single boundary circle $\partial$. Everything we do generalizes directly to an arbitrary number of punctures. 
\begin{remark} Let $X$ be a manifold, and let $Y\to X$ be a $G$-local system. Recall that a reduction of structure group to $H\subset G$ (or $H$-reduction) is an $H$-local system $E$ together with an isomorphism $E\times^{H}G \to Y$ of $G$-local systems. If $H$ is a closed subgroup, the set of $H$-reductions is in bijection with the global sections of $Y/H \to X$. In particular, if $Y\to X$ can be trivialized, and if we fix a trivialization, then an $H$-reduction is equivalent to a map $X \to G/H$. 
\end{remark}

Let $\Theta$ be an irregular class, with Stokes directions $\SS$. Outside the Stokes directions, we have a total ordering on $\L_{\Theta,d}$. The conjugacy class $\gamma \colon \cT_d \to G$ corresponding to $\Theta$ determines a conjugacy class of parabolic subgroups depending only on the component of $\partial \setminus \SS$ in which $d$ lies. This conjugacy class contains a unique standard parabolic subgroup. Enumerating $\partial \setminus \SS =  \partial_1 \cup \ldots \cup \partial_n$, we denote by $P_i$ the standard parabolic subgroup corresponding to $\partial_i$. 

\begin{defn} \label{defn:stokesfil} Let $\cF$ be a $G$-local system on $\wdS$, and let $\Theta $ be an irregular class. A \textit{Stokes filtration} of type $\Theta$ on $\cF$ is given by the following data: For each component $\partial_{i}$ of $\partial \setminus \SS$ a $P_i$-local system $E_i$ on $\partial_{i}$ and a reduction of structure group $\phi_{i}\colon E_{i} \times^{P_i} G  \cong \cF|_{\partial_{i}}$ such that the Stokes condition for filtrations is satisfied across each $d\in\SS$.

A \textit{Stokes-filtered $G$-local system} is a triple $(\cF, \Theta, F)$ consisting of a $G$-local system $\cF$ on $\wdS$, an irregular class $\Theta$ and a Stokes filtration $F$ on $\cF$ of type $\Theta$. 
\end{defn}
We spell out the meaning of Stokes conditions in this setting. For each $d\in \SS$ denote by $d_{1}$ and $d_{2}$ two directions in the neighboring components, ordered counter-clockwise. There are two corresponding total orders $\le_{1}$ on $\L_{\Theta,d_1}$ and $\le_{2}$ on $\L_{\Theta,d_2}$. The Stokes condition says that for $i=1,2$ there is a morphism $\cT_{d_{i}} \to \Aut(\cF_{d_{i}}) = \Aut(\cF_d)$ such that if for any trivialization $\cF_{d}\cong G$, the $P_{d_i}\subset G$ are the parabolic subgroups corresponding to the reduction of structure group on $\partial_{i}$, then $P_{d_i} = P(\le_{i})$.
\begin{remark} \label{rem:framebundle}
    Let us explain how to pass from a Stokes-filtered local system of rank $n$ as defined in \cite{BoalchTop} to a Stokes-filtered $\GL_n$-local system in the sense of the previous definition.

    Let $(V,\Theta,F)$ be a Stokes-filtered local system in the sense of \cite[\S 6]{BoalchTop}, i.e.\ $V\to \wdS$ is a local system of $\CC$-vector spaces, and $F$ is a collection of filtrations of $V|_{\partial_{i}}$, where $\partial \setminus \SS = \partial_{1} \cup \ldots \cup \partial_{r}$, satisfying the Stokes conditions.

    Consider the local system of sets $\cF = \Isom(V,\CC^{n}_{\wdS})$ defined by 
    \[U \mapsto \Isom(V|_{U}, \CC^{n}_{U}) \]
    for any open subset $U\subset \wdS$. Then this is a $\GL_n$-local system, as it carries a free fiberwise $\GL_n$-action by multiplying an isomorphism from the left. For each $i$, consider the local system $E_i\defeq\Isom^{F}(V|_{\partial_{i}}, \CC^{n}_{\partial{i}})$ with sections 
    \[\{\phi \in \Isom(V|_{U},\CC^{n}_{U} ) \mid \phi(F_jV|_{U})=\langle e_1,\ldots,e_{d_j} \rangle  \textup{ for all } j\} \]
    over any open $U\subset \partial_i$, where $d_j = \rk(F_j V|_{\partial_{i}})$. Let $P_i\subset \GL_n$ be the parabolic subgroup stabilizing the flag $W_j = \langle e_1,\ldots,e_{d_j} \rangle$ in $\CC^n$. Then $P_i$ acts on $E_i$, making it a $P_i$-local system on $\partial_i$. The natural map 
    \begin{align*}
        \phi_i\colon E_i\times^{P_i} \GL_n &\to \Isom(V, \CC^{n}_{\wdS} ) \\
        (\psi,g) &\mapsto g\psi 
    \end{align*}
    is an isomorphism, hence we obtain the required reduction of structure group $\phi_{i} \colon E_i\times^{P_{i}} \GL_n \cong \cF$. This determines a Stokes filtration $F'$ in the sense of Definition \ref{defn:stokesfil}, and $(\cF, \Theta, F')$ is a Stokes-filtered $\GL_n$-local system. Going back to a Stokes-filtered local system is done via the associated local system construction. 
\end{remark}

Let us generalize the notion of Stokes-graded local system to arbitrary $G$. 
\begin{defn}\label{defn:stokesgr} Let $\cF$ be a $G$-local system on $\wdS$, and let $\Theta $ be an irregular class, determining an algebraic quotient $T$ of $\cT$. A \textit{Stokes grading} $\Gamma$ of type $\Theta$ on $\cF$ is given by the following data: For each component $\partial_i'$ of $\partial \setminus \AA$ an embedding $T|_{\partial_i'} \hookrightarrow \Aut(\cF|_{\partial_i'})$ such that the Stokes condition for gradings holds across each $d\in \AA$.

A Stokes-graded $G$-local system on $\wdS$ is a triple $(\cF, \Theta, \Gamma)$ consisting of a $G$-local system $\cF$, an irregular class $\Theta$, and a Stokes grading $\Gamma$ of type $\Theta$.
\end{defn}
More precisely, we transport the gradings close to $d$ to the stalk at $d$, to make sense of the Stokes conditions as in Definition \ref{defn:stokesgrcond}, for the partial order $\prec_{d}$ which is refined by exponential dominance. 

\subsection{Comparison}

In this section we will lift the equivalence of connections, Stokes-filtered and Stokes-graded local systems on $\Sigma\setminus \ba$ in \cite{BoalchTop} to the $G$-versions. We proceed by using the Tannakian formalism (see \cite{DM}). 

Denote by $\Conn(\Sigma\setminus \ba)$ the category of algebraic connections on $\Sigma\setminus \ba$. The following lemma is well-known. For completeness, we explicitly state it here. A proof can be found in \cite[Théorème~4.2]{Mal2} (see also \cite[Ch.\ IV, (2.5)]{Mal}).  
\begin{lemma} \label{lem:malgrange}
    The functor associating to an algebraic connection $M\in \Conn(\Sigma\setminus \ba)$ its local system of flat sections equipped with the associated Stokes filtration is an equivalence of neutral Tannakian categories.
\end{lemma}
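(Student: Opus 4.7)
The plan is to construct the functor in question explicitly, verify it is a tensor functor, and then separately establish its full faithfulness and essential surjectivity; the essential surjectivity at the punctures is the serious analytical step, which reduces to a classical theorem of Malgrange and Sibuya.

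First, I would define the functor precisely. Given $M\in\Conn(\Sigma\setminus\ba)$, pull it back along $\pi\colon\wdS\to\Sigma$ and consider its sheaf of flat sections on $\wdS$; away from $\partial$ this is the usual local system, while on $\partial$ one uses Deligne's sheaf of flat sections with moderate growth along each direction, which is a local system on all of $\wdS$ by the Hukuhara--Levelt--Turrittin theorem. The Stokes filtration on each component $\partial_i$ of $\partial\setminus\SS$ is defined in the standard way: a flat section lies in the piece indexed by $q\in\cI$ if and only if $e^{-q}$ times the section is of moderate growth on a small sector around the relevant direction. The fact that the sheaves one obtains, together with the growth-induced filtrations, satisfy the Stokes condition across each $d\in\SS$ is precisely the content of the Hukuhara--Turrittin formal decomposition combined with the existence of asymptotic expansions.

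Second, I would check that the functor is a tensor functor between neutral Tannakian categories. This is formal: on $\Sigma\setminus\ba$ away from the punctures we recover the usual Riemann--Hilbert equivalence, which is manifestly tensor; at the punctures, given $M_1,M_2$, the moderate growth property is preserved under tensor products of flat sections (the product of two moderate growth functions is again moderate), so the Stokes filtration on $M_1\otimes M_2$ is the tensor product of the individual Stokes filtrations. Duals behave symmetrically. The unit object goes to the trivial Stokes filtered local system, so all compatibilities are automatic.

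Third, full faithfulness. A morphism of connections gives a morphism of their sheaves of flat sections, and automatically preserves moderate growth along any direction, hence preserves Stokes filtrations. Conversely, a morphism of Stokes filtered local systems gives a morphism of the underlying meromorphic connections on sectors around each puncture, compatible across Stokes directions; by a GAGA-type theorem (Deligne's algebraization of meromorphic connections on a curve), this assembles into a morphism of algebraic connections on $\Sigma\setminus\ba$.

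The main obstacle, as always in this story, is essential surjectivity, which I would attack locally at each puncture. Away from $\ba$ the classical Riemann--Hilbert correspondence produces a regular meromorphic connection from the underlying local system on $\Sigma\setminus\ba$. Near each $a\in\ba$, given an abstract Stokes filtered local system, one first constructs a formal meromorphic connection whose irregular decomposition reflects the irregular class $\Theta$ and whose formal graded pieces realize the prescribed gradings; this uses the classification of formal connections by the Turrittin decomposition. The Stokes cocycle, extracted from the Stokes filtration as one crosses singular directions, is then upgraded to an honest meromorphic connection by the Malgrange--Sibuya theorem, which identifies the pointed set of isomorphism classes of meromorphic connections with a fixed formal structure with $H^1$ of the sheaf of automorphisms asymptotic to the identity, and identifies this $H^1$ with the set of Stokes cocycles. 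Gluing the resulting local meromorphic connection near each $a\in\ba$ to the regular one on $\Sigma\setminus\ba$ (via the local system data on a punctured neighbourhood) and algebraizing yields an object of $\Conn(\Sigma\setminus\ba)$ whose associated Stokes filtered local system is the one we started with. The nontrivial input is the surjectivity half of Malgrange--Sibuya, which is the genuinely analytic theorem in the whole proof; this is precisely what \cite{Mal2} and \cite{Mal} provide, and is why I would cite those references rather than redoing the argument.
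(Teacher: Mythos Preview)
The paper does not supply its own proof of this lemma at all: it states the result as well-known and refers the reader to \cite[Théorème~4.2]{Mal2} and \cite[Ch.~IV, (2.5)]{Mal}. Your sketch is precisely an outline of the argument contained in those references, and the approach is correct. In particular, you have identified the right division of labour: the tensor compatibility and full faithfulness are essentially formal, while essential surjectivity is the genuine analytic input, resting on the Malgrange--Sibuya classification of meromorphic connections with fixed formalization via the Stokes cocycle. There is no different route taken by the paper to compare against, so nothing further to say on that front.

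One small remark: in your full faithfulness paragraph, the passage from ``morphism of Stokes filtered local systems'' back to ``morphism of connections'' is slightly more delicate than you indicate. One does not immediately get a morphism of meromorphic connections on sectors; rather, one uses that a filtered morphism induces a morphism on the associated gradeds (hence on the formal completions), and then the same $H^1$ computation underlying Malgrange--Sibuya shows that the obstruction to lifting it to a genuine morphism of connections vanishes. This is again in \cite{Mal2}, so your citation covers it, but the phrasing ``gives a morphism of the underlying meromorphic connections on sectors'' skips a step.
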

Additionally, denote by $\SFL(\Sigma, \ba)$ and $\SGL(\Sigma, \ba)$ the categories of Stokes-filtered and Stokes-graded local systems on $\wdS$. The following statement follows easily from \cite[Theorem 11.3]{BoalchTop}.
\begin{theorem}\label{thm:topstokes}
There is an equivalence of neutral Tannakian categories 
\begin{align}
\SFL(\Sigma, \ba) \cong \SGL(\Sigma, \ba).
\end{align} 
\end{theorem}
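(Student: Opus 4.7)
The plan is to upgrade the equivalence of categories from \cite[Theorem 11.3]{BoalchTop} to an equivalence of neutral Tannakian categories, by verifying compatibility of Boalch's functor with the natural tensor, duality, and fiber functor structures on both sides.

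First I would endow both $\SFL(\Sigma, \ba)$ and $\SGL(\Sigma, \ba)$ with neutral Tannakian structures. Direct sums, kernels, and cokernels are inherited from the underlying local systems, giving both categories an abelian $\CC$-linear structure. The tensor product of two Stokes filtered local systems $(V_1, \Theta_1, F_1)$ and $(V_2, \Theta_2, F_2)$ has underlying local system $V_1\otimes V_2$ with irregular class obtained by adding exponents; on each component of the complement of the (possibly enlarged) Stokes set, one equips $V_1\otimes V_2$ with the tensor product filtration. Duals are analogous with negated exponents. For Stokes graded local systems the tensor product multiplies the grading embeddings via the group law on $\cT$, which is immediately well defined. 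A fiber functor is given on both sides by the stalk at any fixed point $x_0\in \Sigma\setminus \ba$.

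Next I would check that these tensor products preserve the Stokes conditions. By Lemma-Definition \ref{lem:orderpar} and Lemma \ref{lem:relpos}, this reduces to verifying that the parabolic subgroup attached to a product ordering on a sum of totally ordered lattices corresponds, under the tensor product of representations, to the natural tensor of the parabolics of the factors; the analogous statement for gradings is more direct since compatibility holds pointwise.

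Finally, one verifies that Boalch's functor is monoidal, respects duals, and is compatible with the fiber functor. The functor from filtered to graded sends a Stokes filtration to the associated graded in generic directions, and passage to associated graded is well known to be compatible with tensor products and duals of filtered objects. Compatibility with the fiber functor is tautological, as both sides share the same underlying local system and hence the same stalk at $x_0$. The main technical obstacle is careful bookkeeping of Stokes directions under tensor products of local systems with distinct irregular classes, since new Stokes directions may appear in the product; once this is tracked, the theorem follows from the equivalence of underlying categories in \cite[Theorem 11.3]{BoalchTop}.
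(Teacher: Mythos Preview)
Your approach is essentially the paper's: the paper simply asserts that the statement ``follows easily from \cite[Theorem 11.3]{BoalchTop}'', and you are spelling out what that entails, namely checking that Boalch's equivalence is compatible with the tensor structure, duals, and fiber functors. So there is no real divergence in strategy.

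That said, your description of Boalch's functor is imprecise in a way that could cause trouble. You write that the functor ``sends a Stokes filtration to the associated graded in generic directions'' and then invoke the standard fact that passage to associated graded respects tensor products. But the functor from $\SFL$ to $\SGL$ is \emph{not} the associated graded in the usual quotient sense: it uses the canonical \emph{splitting} of the Stokes filtration on each component of $\partial\setminus\AA$ (such a splitting exists and is unique, which is the content of the local classification). The underlying local system is untouched, as you yourself correctly note when arguing compatibility with the fiber functor; the quotient construction would not preserve it. For the monoidality check, what you actually need is that the canonical splitting of the tensor-product filtration agrees with the tensor product of the canonical splittings on each sector, which follows from uniqueness of the splitting rather than from general facts about $\mathrm{gr}$. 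Once you phrase it this way, the internal tension in your argument disappears and the proof goes through.

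A minor further remark: invoking Lemma-Definition~\ref{lem:orderpar} and Lemma~\ref{lem:relpos} is unnecessary here, since the theorem concerns the classical $\GL_n$ setting, where the relevant statements about parabolics reduce to elementary linear algebra of filtered and graded vector spaces.
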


Moreover, it is well-known that the groupoid of $G$-connections $\Conn_{G}(\Sigma\setminus \ba)$ is naturally identified, via the associated bundle construction, with the groupoid of faithful tensor functors $\Fun^{\otimes}(\Rep(G),\Conn(\Sigma\setminus \ba))$.

Further denote by $\SFL_{G}(\Sigma, \ba)$ and $\SGL_{G}(\Sigma, \ba)$ the categories of Stokes-filtered and Stokes-graded $G$-local systems on $\widehat{\Sigma}$, respectively. We will prove the following. 
\begin{theorem}\label{thm:Comp}
    There are natural equivalences of categories 
    \begin{align}
        \Conn_{G}(\Sigma\setminus \ba) \cong \SFL_{G}(\Sigma, \ba)\cong \SGL_{G}(\Sigma, \ba).
    \end{align}
\end{theorem}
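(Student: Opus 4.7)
The approach is to use the Tannakian formalism systematically, leveraging the classical equivalences $\Conn(\Sigma\setminus\ba) \simeq \SFL(\Sigma,\ba) \simeq \SGL(\Sigma,\ba)$ established in Lemma~\ref{lem:malgrange} and Theorem~\ref{thm:topstokes}. Since these are equivalences of neutral Tannakian categories, they induce equivalences of the groupoids $\Fun^\otimes(\Rep(G), -)$ for any reductive $G$. The theorem will therefore follow once we establish Tannakian descriptions
\begin{align*}
\SFL_G(\Sigma, \ba) &\simeq \Fun^\otimes(\Rep(G), \SFL(\Sigma, \ba)), \\
\SGL_G(\Sigma, \ba) &\simeq \Fun^\otimes(\Rep(G), \SGL(\Sigma, \ba)),
\end{align*}
which, combined with the analogous and already-recalled identification $\Conn_G(\Sigma\setminus\ba) \simeq \Fun^\otimes(\Rep(G), \Conn(\Sigma\setminus\ba))$, yield the claim.

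To construct the equivalence for Stokes filtered data, I would send $(\cF, \Theta, F)$ with reductions $E_i\times^{P_i}G\cong\cF|_{\partial_i}$ to the tensor functor $V \mapsto (\cF^V, \Theta_V, F^V)$, where the filtration on $\cF^V|_{\partial_i}$ is induced from the $P_i$-reduction $E_i$ by letting $P_i$ act on $V$ through the weight filtration attached to the ordering on $\L_{\Theta,d_i}$. The Stokes condition of Definition~\ref{defn:stokesfil}, reformulated via Lemma~\ref{lem:relpos}, ensures that the classical Stokes conditions on each $\cF^V$ hold. In the reverse direction, given an exact tensor functor $F\colon \Rep(G)\to\SFL(\Sigma,\ba)$, the underlying $G$-local system $\cF$ arises from the composition with the forgetful functor $\SFL(\Sigma,\ba)\to\Loc(\wdS)$ by the Tannakian description of $G$-local systems recalled in \S\ref{s:setting}. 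On each $\partial_i$, the compatible family of filtrations on the $\cF^V|_{\partial_i}$ together with tensor compatibility determines a parabolic reduction, essentially by the framing construction of Remark~\ref{rem:framebundle}: one exhibits the reduction as the limit of framings compatible across all representations, and any single faithful $V$ identifies the stabilizer as a conjugate of the standard parabolic attached to the component. The construction for $\SGL_G$ is entirely parallel, with embeddings $T|_{\partial_i'}\hookrightarrow\Aut(\cF|_{\partial_i'})$ replacing parabolic reductions, and Definition~\ref{defn:stokesgrcond} playing the role of the filtration Stokes condition.

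The main obstacle is a careful verification that the $G$-theoretic Stokes conditions of Definitions~\ref{defn:stokesfil} and~\ref{defn:stokesgrcond} translate, representation by representation, into the classical Stokes conditions on the associated local systems, and conversely that compatibility across all representations forces the extra data to come from a single parabolic reduction or torus embedding at the level of $G$. Concretely, this amounts to the observation that $P_G(S,\le)$ of Lemma-Definition~\ref{lem:orderpar} acts on any $V\in\Rep(G)$ as the parabolic of $\GL(V)$ stabilizing the weight flag determined by $\le$, and that the wild monodromy element $g(\gamma_1,\gamma_2)$ of Remark~\ref{rem:wildmonodromy} maps, under any representation, to the wild monodromy produced in \cite[\S 3.4]{BoalchTop}. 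Once this compatibility is established, exactness and faithfulness of the resulting tensor functors follow from those of the underlying local-system functors, and the theorem is obtained by composing the Tannakian descriptions with Lemma~\ref{lem:malgrange} and Theorem~\ref{thm:topstokes}.
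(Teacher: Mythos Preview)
Your overall strategy is the paper's: reduce Theorem~\ref{thm:Comp} to the two Tannakian identifications
\[
\SFL_G(\Sigma,\ba)\simeq\Fun^\otimes(\Rep(G),\SFL(\Sigma,\ba)),\qquad
\SGL_G(\Sigma,\ba)\simeq\Fun^\otimes(\Rep(G),\SGL(\Sigma,\ba)),
\]
and then invoke Lemma~\ref{lem:malgrange} and Theorem~\ref{thm:topstokes}. Your forward direction (associated bundle construction) is also the paper's.

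The difference lies in the reverse direction for the filtered case. You propose to reconstruct the $P_i$-reduction on each $\partial_i$ directly from the tensor-compatible family of filtrations on the $\cF^V|_{\partial_i}$, citing Remark~\ref{rem:framebundle}. The paper instead takes a detour: it composes the given tensor functor with the equivalence $\SFL\simeq\SGL$ of Theorem~\ref{thm:topstokes}, obtains on each component of $\partial\setminus\AA$ a tensor functor into $\cI$-graded local systems, extracts from that a morphism $\cT|_{\partial'_i}\to\Aut(\cF|_{\partial'_i})$ (gradings being Tannakian-friendly via \cite[Lemma~6.12]{Sim1}), and only then passes to a filtration via Lemma~\ref{lem:GfiltrGgrad}. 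The Stokes conditions for filtrations then hold ``by construction'' because they are inherited from the Stokes conditions for gradings. Your direct route is legitimate---an exact $\otimes$-filtration on a fiber functor corresponds to a parabolic subgroup of the Tannaka group---but Remark~\ref{rem:framebundle} only treats $\GL_n$ and does not itself supply the general-$G$ statement; you would need to invoke the Tannakian theory of filtrations (Saavedra Rivano, or the splitting-by-cocharacter argument) and then separately check that the resulting parabolics satisfy the relative-position condition of Lemma~\ref{lem:relpos}. The paper's graded detour trades that verification for the softer fact that gradings glue across singular directions and automatically induce the correct filtered Stokes conditions.
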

By Lemma \ref{lem:malgrange} and Theorem \ref{thm:topstokes}, it will suffice to prove the next Lemma. 
\begin{lemma}
    There are natural equivalences of groupoids 
    \begin{align}
        \SFL_{G}(\Sigma, \ba)&\cong \Fun^{\otimes}(\Rep(G),\SFL(\Sigma, \ba)), \textup{and} \\
        \SGL_{G}(\Sigma, \ba)&\cong \Fun^{\otimes}(\Rep(G),\SGL(\Sigma, \ba)).
    \end{align}
    
\end{lemma}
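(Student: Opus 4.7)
The plan is to construct, in each case, a natural functor via the associated local system construction and to verify that it is an equivalence of groupoids, reducing the classification to the classical cases recorded in Lemma \ref{lem:malgrange} and Theorem \ref{thm:topstokes}. Define $\Phi\colon \SFL_G(\Sigma, \ba) \to \Fun^{\otimes}(\Rep(G), \SFL(\Sigma, \ba))$ by sending $(\cF, \Theta, F)$ to the functor $V \mapsto \cF \times^G V$ equipped with: (i) the irregular class obtained by composing a representative $\gamma\colon \cT \to G$ of $\Theta$ with $G \to \GL(V)$, and (ii) the filtration on each $\partial_i$ induced by the $P_i$-reduction of $\cF$ and the ordering $\le_i$. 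Concretely, any local splitting of the $P_i$-reduction yields an $S$-grading of $V$, and the filtration $F_\mu V = \bigoplus_{\nu \le_i \mu} V_\nu$ is independent of the splitting, since two splittings differ by the unipotent radical of $P_i$, which by construction of $P_G(S, \le_i)$ preserves this filtration. This defines a tensor functor, and the Stokes condition on parabolic reductions (Lemma \ref{lem:relpos}) translates directly into the classical Stokes condition on the induced filtrations of each $V$.

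For essential surjectivity, given a tensor functor $\omega\colon \Rep(G) \to \SFL(\Sigma, \ba)$, post-composition with the forgetful functor $\SFL(\Sigma, \ba) \to \Loc(\wdS)$ produces a tensor functor to $\Loc(\wdS)$, which yields a $G$-local system $\cF$ via the Simpson correspondence cited in Section \ref{s:setting}. The irregular class is recovered as follows: for each boundary component $\partial$, the compatible family of morphisms $\cT \to \Aut(\omega(V)|_{\partial})$ factoring through algebraic quotients assembles, by Tannakian reconstruction applied to a fiber functor at a point of $\partial$, into a morphism $\cT \to \Aut(\cF|_{\partial})$ factoring through an algebraic quotient. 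The Stokes filtration is recovered on each $\partial_i$ by the classical Tannakian correspondence between $\otimes$-compatible filtrations of a fiber functor and parabolic subgroups of the Tannakian group (as in Saavedra Rivano; cf.\ \cite{DM}), which produces a parabolic reduction of $\cF|_{\partial_i}$ of the correct type. Full faithfulness then follows from that of the Simpson equivalence, combined with the observation that morphisms of tensor functors preserving the Stokes data correspond bijectively to morphisms of $G$-local systems preserving the Stokes data.

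The main obstacle is verifying that the Stokes conditions match under the Tannakian correspondence. Explicitly, one must check that the relative-position condition on two parabolic reductions of $\cF$ at a singular direction $d$ (formulated in Lemma \ref{lem:relpos}) holds if and only if the induced filtrations on every representation $V$ satisfy the classical Stokes condition from \cite[Definition 3.9]{BoalchTop}. This reduces to the elementary observation that, for a torus $S \to G$ and an ordering $\le$ on its weight lattice, the parabolic $P_G(S, \le)$ is precisely the stabilizer in $G$ of the induced filtration $F_\mu V = \bigoplus_{\nu \le \mu} V_\nu$ on every representation, so that specifying a $\otimes$-compatible system of such filtrations is equivalent to specifying the corresponding parabolic reduction. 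The argument for Stokes graded $G$-local systems proceeds identically but is technically simpler: $\otimes$-compatible gradings of a fiber functor correspond directly to morphisms from the pro-torus $\cT$ into the Tannakian group, and Definition \ref{defn:stokesgrcond} transports across unchanged.
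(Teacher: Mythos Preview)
Your forward construction (the associated-bundle tensor functor) coincides with the paper's. The difference lies in the inverse direction. The paper does \emph{not} recover the Stokes filtration directly from a Tannakian filtrations/parabolics correspondence; instead it composes the given tensor functor with the equivalence $\SFL(\Sigma,\ba)\cong\SGL(\Sigma,\ba)$ of Theorem~\ref{thm:topstokes}, reconstructs a Stokes \emph{grading} on $\cF$ (where the Tannakian picture is transparent: a $\otimes$-compatible $\cI$-grading is literally a morphism $\cT\to\Aut(\cF)$), and then converts this grading back to a filtration sector by sector via Lemma~\ref{lem:GfiltrGgrad}. Your route avoids this detour by invoking the Saavedra Rivano / Deligne--Milne correspondence between $\otimes$-filtrations of a fibre functor and parabolic reductions. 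This is more direct and conceptually clean, but you should be aware that the standard reference states the correspondence for $\ZZ$-indexed filtrations, whereas here the indexing is by the totally ordered group $(\Lambda_{\Theta,d},\le_d)$; the extension is routine but worth flagging. The paper's detour, by contrast, is entirely self-contained given the results already proved.

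One genuine imprecision: in your recovery of the irregular class you speak of ``the compatible family of morphisms $\cT\to\Aut(\omega(V)|_\partial)$'', but a Stokes \emph{filtered} local system does not carry such a morphism---only a conjugacy class thereof (the irregular class $\Theta_V$). The paper handles this by first applying the associated-graded functor $\gr\colon\SFL\to\{\cI\text{-graded local systems}\}$ and then taking the stalk, which does produce an honest map $\cT_d\to G$ up to conjugacy. Your argument needs this step (or an equivalent one) inserted; as written it conflates the filtered and graded situations at exactly the point where they differ.
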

\begin{proof}
We only prove the first equivalence. The proof of the second one is analogous. We start with the construction of a tensor functor out of a Stokes-filtered $G$-local system. So let $(\cF, \Theta, F)$ be a Stokes-filtered $G$-local system. Recall that $\cF$ is a $G$-local system on $\wdS$, $\Theta$ is an irregular class, and $F$ is a Stokes filtration. For $\partial \setminus \SS = \partial_{1}\cup \ldots \cup \partial_{r}$, it consists of a $P_i$-torsor $E_i$ and a reduction of structure group $\phi_{i} \colon E_{i}\times^{P_{i}} G \cong \cF|_{\partial_{i}}$ on each $\partial_{i}$. 

Now let $(\pi, V)$ be a complex representation of $G$. Then $\cF \times^{G} V$ is a local system of $\CC$-vector spaces with fiber $V$ on $\wdS$. We need to construct an irregular class (of total dimension $\dim(V)$) for it, and a Stokes filtration in the sense of \cite[\S 6]{BoalchTop}. For the irregular class, choose any $d\in \partial$ and any representative $\cT_d \to G$. Composing with $\pi$ determines a map $\cT_d \to \GL(V)$, and we let $\Theta_{V}$ be its conjugacy class. 

We now explain how to construct the Stokes filtration on $\cF \times^{G} V$. 
The irregular class $\Theta$ determines a standard parabolic $P_i \subset G$ for any $i=1,...,r$, and similarly $\Theta_{V}$ determines a parabolic subgroup $P_{V,i} \subset \GL(V)$ for all $i$.  

For simplicity, let us drop the index $i$ from the notation whenever convenient for now. It is clear that $\rho(P)\subset P_{V}$. The reduction of structure group $\phi\colon E \times^{P} G \cong \cF|_{\partial_{i}}$ induces an isomorphism
\[\cF\times^{G} V |_{\partial_{i}}\cong (E \times^{P} G) \times^{G} V \cong E\times^{P} V, \]
and further 
\[E\times^{P} V \cong E \times^{P} P_{V} \times^{P_{V}} V,\]
where $E_{V} \defeq E \times^{P} P_{V}$ is now a $P_{V}$-local system. Write $P_{V} = \Stab({F_j})$ with $F_j\subset V$. Then we obtain a filtration of $\cV \defeq \cF\times^{G} V |_{\partial_{i}}$ by defining $F_j\cV \defeq E_{V} \times^{P_{V}} F_j$. 

We leave it to the reader to verify that this defines a Stokes filtration on $\cF \times^{G} V$, and that we have obtained the required faithful tensor functor.

We now reconstruct a Stokes-filtered $G$-local system from a given faithful tensor functor $\Phi\colon\Rep(G) \to \SFL(\Sigma, \ba)$. In this situation, for any complex representation $(\pi,V)$ we are given the following data. A local system of vector spaces $\cF_{V}$ with fiber $V$ on $\wdS$, an irregular class $\Theta_{V}$ of total dimension $\dim(V)$, and a Stokes filtration of type $\Theta_{V}$ on $\cF_{V}$.

Composing $\Phi$ with the forgetful functor $\SFL(\Sigma, \ba) \to \Loc(\wdS)$ we obtain a faithful tensor functor $\Rep(G) \to \Loc(\wdS)$, which is the same as a $G$-local system $\cF$ on $\wdS$ (see e.g.\ \cite[§6]{Sim1}).

To reconstruct an irregular class, we use the functor 
\[\gr\colon \SFL(\Sigma, \ba) \to \{\cI\textup{-graded local systems on $\partial$}\},\]
associating to each Stokes-filtered local system the associated $\cI$-graded local system on $\partial$, cf.\ \cite{Mal2}. Taking the stalk at an arbitrary $d \in \partial$ we obtain a faithful tensor functor from $\SFL(\Sigma, \ba)$ to the category of $\cI_d$-graded vector spaces. Its Tannaka group is $\cT_d$, so we obtain a map $\cT_d \to G$ by composing with the given functor $\Phi$, well-defined up to conjugation (because we chose the basepoint $d$). The conjugacy class of $\cT_d \to G$ determines an irregular class $\Theta$.

Finally, we will reconstruct the Stokes filtration, and we will use Theorem \ref{thm:topstokes} for this. We actually reconstruct a Stokes grading, and pass to the associated Stokes filtration. Recall that we denote by $\AA = \AA_{\Theta}$ the singular directions for the irregular class constructed above. Write $\partial \setminus \AA = \partial_1' \cup \ldots \cup \partial_r'$.

For each $i=1,\ldots,r$ choose a direction $d_i \in \partial_i'$. It is easy to check that $d_i$ is not a singular direction for $\Theta_{V}$. 
Thus, composing $\Phi$ with the equivalence $\SFL(\Sigma, \ba) \cong \SGL(\Sigma, \ba)$, we obtain a faithful tensor functor from $\Rep(G)$ to $\cI|_{\partial_i'}$-graded local systems on $\partial_i$ for every $i$. The discussions following \cite[Lemma 6.12]{Sim1} can be adapted to see that giving such a tensor functor is equivalent to giving an $\cI|_{\partial_i'}$-grading on $\cF|_{\partial_i'}$. Indeed, the grading gives for every direction $d \in \partial_i'$ a morphism $\cT_d \to \Aut^{\otimes}(\cF \times^G (-))$, and varying $d$ one obtains $\cT|_{\partial'_i} \to \Aut(\cF|_{\partial_i'})$. One checks that in this way, we have obtained a Stokes-graded $G$-local system $(\cF, \Theta, \Gamma)$, satisfying in particular the Stokes conditions for gradings. 

To conclude, we now pass to a Stokes filtration. For that, we use the following general lemma. 

\begin{lemma}\label{lem:GfiltrGgrad} 
    Let $\cF$ be an $\cI$-graded $G$-local system on a well-behaved topological space $X$ with local isomorphism class $\Theta$. Assume that $\L_{\Theta,x}$ has a set of positive weights $\Lambda_x^+$ for every $x\in X$, which varies locally constantly. This determines a standard parabolic subgroup $P\subset G$. Then, $\cF$ has a natural structure of $\cI$-filtered $G$-local system, i.e.\ there is a reduction of structure group $\phi\colon E \times^{P} G \xrightarrow{\sim} \cF$, where $E$ is a $P$-local system on $X$.
\end{lemma}
\begin{proof}
    Denote by $\cF_{\triv}$ the trivial $G$-local system on $X$, and choose a trivialization $\cF \cong \cF_{\triv}$. Then this identifies $\Aut(\cF) \cong G$. We are given a morphism $\cT \to \Aut(\cF)\cong G$, determining a parabolic subgroup $P_{\Aut(\cF)}(\Lambda^+)$ in $\Aut(\cF)$, and hence in $G$. (Here, $\Lambda^+$ is defined by the locally constant $\Lambda_x^+$.)
    Let $E=\Isom^{P}(\cF_{\triv},\cF)$ be the set of trivializations of $\cF$ which map $P_{\Aut(\cF)}(\Lambda^+)$ to $P$. It is easy to check that this is a $P$-local system, and that the natural map 
    \[E \times^{P} G \to \cF\]
    is an isomorphism. This way we obtain the required reduction of structure group.
\end{proof}
Applying Lemma~\ref{lem:GfiltrGgrad} on each component of $\partial \setminus (\AA \cup \SS)$, we obtain on each of them a standard parabolic subgroup, and a reduction of structure group. The Stokes conditions for gradings guarantee that these data agree when crossing a singular direction in $\AA$. We have thus constructed the required reduction of structure group on each component of $\partial \setminus \SS$. These data satisfy the Stokes conditions for filtrations by construction. We leave it to the reader to check this construction is quasi-inverse to the construction of the tensor functor in the first part of the proof. 
\end{proof}

\subsection{Stokes $G$-local systems}
Let us also make the link with the notion of Stokes $G$-local system from \cite{BY}. Recall from \emph{loc.~cit.}\ that, given a Riemann surface $\Sigma$ with a marked point $\ba=\{a\}$ and an irregular class $\Theta$, one denotes by $\widehat{\Sigma}$ its real oriented blow-up at $\ba$ with boundary circle $\partial$. One then defines $\widetilde{\Sigma}$ to be the surface which is obtained as follows: Consider a small open neighbourhood $\HH$ of $\partial$ in $\widehat\Sigma$, bounded by $\partial$ and another circle $\widetilde{\partial}$. We can choose a homeomorphism $e\colon \partial \To{\sim} \widetilde\partial$, respecting the orientation (for example, choose $e(d)$ and $d$ to have the same argument in some local coordinate around $a$). Then, remove from $\widehat{\Sigma}$ the point $e(d)$ on $\widetilde{\partial}$ for each singular direction $d\in\AA\subset \partial$.

If $d\in \AA$, we will denote by $\gamma_d$ a small loop based at $d\in \partial$ going around the point $e(d)$ in a positive sense (and around no other of the punctures introduced above).

For a local system $\cL$ on $\widetilde\Sigma$ and any $d\in \AA$, we define the Stokes group $\SS \mathrm{to}_d\subset \Aut(\cL_d)$ as follows: Let $\ell$ be the Lie algebra of $\Aut(\cL|_d)$. We have a decomposition $\ell=\bigoplus_{\lambda\in I_d} \ell_\lambda$ and we define $\SS\mathrm{to}_d$ to be the product of the groups $\exp(\ell_\lambda)$ for all $\lambda$ with maximal decay at $d$.

\begin{defn}[{\cite[Definition~13]{BY}}]\label{def:StokesLocSys}
    A Stokes $G$-local system with irregular class $\Theta$ is a $G$-local system $\cS$ on $\widetilde{\Sigma}$ such that the following properties hold:
    \begin{itemize}
        \item[(1)] the local system $\cS|_\partial$ is $\cI$-graded with irregular class $\Theta$, and
        \item[(2)] for any $d\in \AA$, the monodromy of $\cS_d$ around $\gamma_d$ is an element of $\SS \mathrm{to}_d$.
    \end{itemize} 
\end{defn}
Let us denote by $\mathrm{SLoc}_G(\Sigma,\mathbf{a})$ the category of Stokes $G$-local systems as in the definition above (with arbitrary irregular class). Note that the following result was also obtained independently in \cite[Corollary 2.15.]{HS}. 

\begin{prop}\label{prop:EquivStokesGLS}
    The category of Stokes-graded $G$-local systems is equivalent to the category of Stokes $G$-local systems.
    As a consequence, there is an equivalence of categories $\Conn_G(\Sigma\setminus\ba)\cong \mathrm{SLoc}_G(\Sigma,\ba)$.
\end{prop}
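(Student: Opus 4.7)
The plan is to mimic the Tannakian reasoning used in the proof of Theorem~\ref{thm:Comp}, reducing the equivalence to the classical $\GL_n$-case treated in \cite{BoalchTop}. Concretely, I would proceed in three steps: first, extend the Tannakian characterization of Stokes graded and filtered $G$-local systems to Stokes $G$-local systems; second, invoke the $\GL_n$-level equivalence between Stokes local systems and Stokes graded local systems; third, combine with Theorem~\ref{thm:Comp} to deduce the consequence.

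For the first step, I would show that the groupoid $\mathrm{SLoc}_G(\Sigma,\ba)$ is naturally equivalent to the groupoid of exact faithful tensor functors $\Rep(G)\to \mathrm{SLoc}(\Sigma,\ba)$, where $\mathrm{SLoc}(\Sigma,\ba)$ denotes the category of classical Stokes local systems in the sense of \cite{BoalchTop}. In one direction, given $\cS\in \mathrm{SLoc}_G(\Sigma,\ba)$ and $(\pi,V)\in \Rep(G)$, the associated bundle $\cS\times^G V$ is a local system on $\widetilde\Sigma$; the $\cI$-grading on $\cS|_\partial$ pushes forward to an $\cI$-grading with some irregular class $\Theta_V$; and the monodromy around $\gamma_d$ lies in the corresponding $\GL(V)$-Stokes group because $\pi$ carries each weight space $\frg_\lambda$ into $\mathrm{End}(V)_\lambda$, hence $\pi(\exp(\frg_\lambda))\subseteq \exp(\mathrm{End}(V)_\lambda)$, preserving the maximal-decay condition. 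In the other direction, a tensor functor $\Phi\colon \Rep(G)\to \mathrm{SLoc}(\Sigma,\ba)$ composed with the forgetful functor to $\Loc(\widetilde\Sigma)$ yields, via \cite[\S6]{Sim1}, a $G$-local system on $\widetilde\Sigma$; the irregular class on its restriction to $\partial$ is reconstructed exactly as in the proof of Theorem~\ref{thm:Comp}, by composing with the graded-local-system functor on $\partial$ and taking the Tannaka group; and the monodromy condition at each $\gamma_d$ follows from a Tannakian characterization of $\mathrm{Sto}_d\subset G$ as the subgroup whose image in each $\GL(V)$ lies in the corresponding $\GL_n$-Stokes group.

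With this Tannakian description in hand, the first equivalence of the proposition follows by combining with the analogous description of $\SGL_G(\Sigma,\ba)$ established en route to Theorem~\ref{thm:Comp} and the $\GL_n$-level equivalence $\SGL(\Sigma,\ba)\cong \mathrm{SLoc}(\Sigma,\ba)$ extracted from \cite[Theorem 11.3]{BoalchTop}. The consequence then follows immediately by composing with the equivalence $\Conn_G(\Sigma\setminus\ba)\cong \SGL_G(\Sigma,\ba)$ of Theorem~\ref{thm:Comp}. The main obstacle I anticipate is precisely the verification in the first step that the Stokes group condition of Definition~\ref{def:StokesLocSys} is tensor-compatible and functorial in $\pi\in \Rep(G)$; once this compatibility is secured, the remainder of the argument reduces to routine bookkeeping using results already available in the literature.
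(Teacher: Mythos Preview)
Your Tannakian reduction is a viable and correct route, but it is genuinely different from what the paper does. The paper gives a direct, hands-on construction of mutually inverse functors between $\SGL_G(\Sigma,\ba)$ and $\mathrm{SLoc}_G(\Sigma,\ba)$: from a Stokes $G$-local system $\cS$ it restricts outside the halo and extends to $\widehat\Sigma$, then checks the Stokes condition for gradings at each $d\in\AA$ by showing that conjugation by an element of $\SS\mathrm{to}_d$ fixes $P_G(S_1,\le)$ for every total order extending $\prec_d$; in the other direction it uses the wild monodromy elements $g_d$ of Remark~\ref{rem:wildmonodromy} to glue the sector-wise gradings into a globally graded local system on $\partial$, and then verifies $g_d\in\SS\mathrm{to}_d$ by observing that $g_d$ normalizes (hence lies in) $P_G(S_1,\le)$ for every such $\le$, and that the unipotent radical of the intersection of these parabolics is exactly $\SS\mathrm{to}_d$. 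Your approach instead parallels the proof of Theorem~\ref{thm:Comp}: you would establish $\mathrm{SLoc}_G\cong\Fun^\otimes(\Rep(G),\mathrm{SLoc})$ and then transport the $\GL_n$-level equivalence from \cite{BoalchTop}. The key point you flag---that an element $g\in G$ whose image in every $\GL(V)$ lies in the $\GL(V)$-Stokes group must itself lie in $\SS\mathrm{to}_d\subset G$---does go through (take $V$ faithful, write $g=\exp X$, and compare weight components of $d\pi(X)$), so there is no gap. The trade-off is that the paper's argument is self-contained and makes the role of the wild monodromy explicit, whereas your argument is more uniform with the rest of \S\ref{s:stokes} but requires setting up the tensor structure on $\mathrm{SLoc}(\Sigma,\ba)$ and outsources the geometric content to \cite{BoalchTop}.
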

\begin{proof}
    Given a Stokes $G$-local system $\cS$, we want to define a Stokes-graded $G$-local system $\cF$ from it.		
    Recall that $\cS$ is a local system on $\widetilde{\Sigma}$. If we restrict it to $\widetilde{\Sigma}\setminus \HH\subset \widehat{\Sigma}$, we get a local system that has an obvious natural extension to $\widehat{\Sigma}$ (since the latter is homotopy equivalent to $\widetilde{\Sigma}\setminus \HH$), and we define $\cF$ to be this local system.
    
    From the datum of the given Stokes $G$-local system, we have in particular a morphism of local systems of groups
    $$\cT\to\Aut(\cS|_\partial)$$
    factoring through some algebraic quotient $T$ of $\cT$.
    Since for any $d\notin \AA$, there is a natural isomorphism $\cS_d\simeq \cF_d$ (given by following a straight path from $d$ through $e(d)$), we get for any $d\notin\AA$ a morphism $T_d\hookrightarrow \Aut(\cF_d)$.
    
    It remains to prove that these gradings satisfy the Stokes condition at each $d\in\AA$: Let $d\in \AA$ be a singular direction, then we obtain two morphisms $T_d\hookrightarrow \Aut(\cF_d)$ from the ones on both sides of $d$. We can identify $\Aut(\cF_d)\simeq \Aut(\cS_d)\simeq G$, and hence one obtains two tori $S_1,S_2\subset G$. (Note that this identification involves a choice, which does, however, not affect what follows.) From the definition of a Stokes $G$-local system, we see that these tori are related by $S_2=sS_1s^{-1}$ for some $s\in\SS\mathrm{to}_d$. One can then easily identify the character groups $X^*(S_1)$ and $X^*(S_2)$. Moreover, it is not difficult to see that the weight spaces with respect to $S_1$ and those with respect to $S_2$ (for the adjoint action of $G$ on $\frg$) are related via $\Ad_{s}$, and that one can therefore identify the lattices $\Lambda_{S_1}$ and $\Lambda_{S_2}$. Let $\Lambda^+$ be a set of positive weights on $\Lambda\defeq \Lambda_{S_1}\cong \Lambda_{S_2}$ compatible with the partial order $\prec_d$ given by maximal decay (cf.\ \S \ref{sec:stokesdir}). The compatibility with $\prec_d$ means that $\Lambda^{\prec_d,-}\defeq \{\lambda\in\Lambda\mid \lambda\prec_d 0\}\subset \Lambda^-$. Then (using the notations from \S 3.4), we get from the above that $\frp_2 = \Ad_{s}(\frp_1)$. Now, by definition, we have $\mathbb{S}\mathrm{to}_d=\exp(\bigoplus_{\rho\in \Lambda, \rho\prec_d 0} \frg_\rho)\subset P_G(\Lambda^-)$ because of the compatibility of $\Lambda^-$ with $\prec_d$. Hence, $\Ad_s$ acts on $\frp_1$, i.e.\ $\frp_2=\Ad_s(\frp_1)=\frp_1$, as desired.

    Conversely, assume that we are given a Stokes-graded $G$-local system $\cF$. This means that on any connected component $\partial_i\subset \partial\setminus \AA$, we have a morphism $\cT|_{\partial_i}\to\Aut(\cF|_{\partial_i})$, which extends naturally to a morphism $\cT|_{\overline{\partial_i}}\to \Aut(\cF|_{\overline{\partial_i}})$ on the closure of $\partial_i$. In particular, at a singular direction $d\in\AA$, we obtain two compatible morphisms $T_d\to \Aut(\cF_d)$, and hence a unipotent element $g_d\in\Aut(\cF_d)$ (cf.\ Remark~\ref{rem:wildmonodromy}). We use this element to glue $\cF|_{\overline{\partial_i}}$ and $\cF|_{\overline{\partial_{i+1}}}$ to a local system on $\overline{\partial_i}\cup\overline{\partial_{i+1}}$ (if $\partial_i$ and $\partial_{i+1}$ are adjacent to the singular direction $d$). In this way, we obtain a local system $\cS^0$ on $\partial$, and by construction it comes with a global grading $\cT\to\Aut(\cS^0)$.

    Now we construct the Stokes local system $\cS$: We can naturally extend $\cS^0$ as a local system on the halo $\HH$. On the other hand, the local system $\cF$ can be considered as a local system outside the halo. Both are glued to each other in the obvious way to obtain a local system on $\widetilde\Sigma$: The boundary of $\HH$ consists of segments bounded by punctures corresponding to consecutive singular directions. In each such segment, $\cS^0$ is by construction identified with the restriction of $\cF$ on this segment.

    It remains to show that $\cS$ satisfies the second condition in Definition~\ref{def:StokesLocSys}. This comes down to proving that for each $d\in\AA$, the above element $g_d$ lies in $\SS \mathrm{to}_d$. Let $\Lambda_{S_1}^+$ be a set of positive weights of $\Lambda_{S_1}$ compatible with $\prec_d$, and let $\Lambda_{S_2}^+$ be the corresponding set of positive weights of $\Lambda_{S_2}^+$ via the isomorphism $\Lambda_{S_1}\cong \Lambda_{S_2}$. Since $S_2=g_dS_1g_d^{-1}$, we can conclude (similarly as in the first part of the proof) that $P_G(\Lambda_{S_2}^+) = g_d P_G(\Lambda_{S_1}^+) g_d^{-1}$ . On the other hand, we have $P_G(\Lambda_{S_2}^+) = P_G(\Lambda_{S_1}^+)$ by the Stokes condition for gradings. Hence, $g_d$ is an element in the normalizer of $P_G(\Lambda_{S_1}^+)$, which coincides with $P_G(\Lambda_{S_1}^+)$ since it is a parabolic subgroup. Since this argument holds for any set of positive weights compatible with $\prec_d$, we get $g_d\in \bigcap\limits_{\Lambda_{S_1}^+ \text{ extends } \prec_d} P_G(\Lambda_{S_1}^+)$. It is easy to check that the unipotent radical of this intersection is precisely the Stokes group $\SS \mathrm{to}_d$ (cf.\ \cite[Lemma 3.8]{BoalchTop} for the $\GL_n$-case). Therefore, we have $g_d \in \SS \mathrm{to}_d$.

    One can check that the two constructions given here are inverse to each other, and this concludes the proof.
\end{proof}

Note that the equivalence $\Conn_G(\Sigma\setminus\ba)\cong \mathrm{SLoc}_G(\Sigma,\ba)$ has also been proved in \cite[Appendix~A]{BoalchGeomBraiding} for unramified irregular classes.

\section{Braids and Stokes filtrations}\label{s:braids}

\subsection{Braid varieties and stacks} In this section, we relate Stokes-filtered $G$-local systems to certain Betti moduli spaces attached to braids, considered in \cite{BBMY}. Let us first recall the definition of these spaces. For that, recall that we fixed a maximal torus $T$ and a Borel subgroup $B$ and that we can identify $W\cong \WW$ using this coice. We will only use the notation $W$ from now on for simplicity.

The pair $(W, S)$ denotes the Weyl group for $G$ with generating set of simple reflections corresponding to $B$. Let $\Br_{W}$ be the Artin braid group for $(W,S)$ with positive braid monoid $\Br^{+}_{W}$. The group $W$ is recovered as a quotient $\Br_{W} \to W$ and each $w\in W$ has a canonical lift $\tilde{w}$ in $\Br^{+}_{W}$, given as a reduced word in elements of $S$. 

In \cite{BBMY}, the authors associate to each braid $\beta \in \Br^{+}_{W}$ a smooth algebraic stack $\cM(\beta)$ over $\CC$, which is defined as follows: Write $\beta = \tilde{w}_{1}\cdot \ldots \cdot \tilde{w}_{n}$ for some sequence of elements $w_1,\ldots,w_n\in W$. Let $Y$ be any $\CC$-scheme, then $\cM(\beta)(Y)$ is the groupoid with the following objects (and obvious morphisms):
\begin{enumerate}
\item An $(n+1)$-tuple $(E_0,\ldots,E_n)$ of $B$-torsors over $Y$,
\item for all $0\le i \le n-1$, isomorphisms $\iota_{i}\colon E_i \times^{B} G\To{\sim} E_{i+1} \times^{B} G$ such that under this identification both $B$-reductions are in relative position $w_i$, and
\item an isomorphism $\tau\colon E_n \To{\sim} E_0$.
\end{enumerate}

\begin{remark} 
    Any object of $\cM(\beta)(Y)$ is isomorphic to one where $E_n=E_0$, and $\tau = \id$, by replacing $\iota_{n-1}$ with $\tau^{B} \circ \iota_{n-1}$. Here, $\tau^{B}$ denotes the isomorphism $E_n \times^{B} G \To{\sim} E_0\times^{B} G$ induced by $\tau$.
\end{remark}
Alternatively, $\cM(\beta)$ can be identified with a quotient stack as follows. We let 
\begin{align}\label{eqn:globalquotient}
    \cM^{\sharp}(\beta) = \{(B_0,\ldots,B_n, h) \in \cB^{n+1} \times G \mid (B_i, B_{i+1})\in \cO(w_i), B_n = {}^{h}B_0 \}.
    \end{align}
The group $G$ acts on $\cM^{\sharp}(\beta)$, namely it acts diagonally on the $B_i$, and by conjugation on $h$. The quotient stack $[G\backslash \cM^{\sharp}(\beta)]$ is identified with $\cM(\beta)$. 

\begin{remark}\label{rem:braidindep}
    The spaces $\cM(\b)$ and $\cM^{\sharp}(\b)$ depend only on the braid $\b$, not on the chosen decomposition. More precisely, when $w=w_1w_2$, then $\cM(\tilde{w})$ and $\cM(\tilde{w}_{1}\tilde{w}_{2})$ are canonically isomorphic. This follows from work of Deligne \cite{Deligne}. 
\end{remark}

\subsection{Moduli stack of Stokes-filtered $G$-local systems} Let $\Sigma = \PP^1$, $\ba = \{0,\infty\}$, fix the tame irregular class at $0$, and fix an irregular class $\Theta$ at $\infty$. Denote by $\partial$ the boundary circle at $\infty$. We can slightly adapt the above construction to define a moduli stack of Stokes-filtered $G$-local systems on $\wdS$. Let $\SS$ be the set of Stokes directions for $\Theta$, and fix a non-Stokes direction $d_0\in\partial$. This allows us to enumerate $\SS = \{d_1,\ldots,d_n \}$ in a counter-clockwise way, starting from $d_0$. We denote the open component between $d_i$ and $d_{i+1}$ by $\partial_i\subset \partial$.

Recall that in addition to a standard parabolic subgroup $P_i$ for each component $\partial_i$ of $\partial$, we have a well-defined parabolic subgroup $W_i \subset W$ corresponding to $P_i$, and the relative position $w_i \in W_i \backslash W / W_{i+1}$ of the exponential dominance orderings across the direction $d_i$. Note that no relative position $w_i$ can be trivial.

Now, a Stokes-filtered $G$-local system $(\cF, \Theta, F)$ is given by a $G$-local system $\cF$ on $\PP^1 \setminus \{0,\infty\}$, and a reduction of structure group $\phi_i \colon E_i \times^{P_i} G \cong \cF|_{\partial_i} $ on each component $\partial_i$ of the boundary circle $\partial$ at $\infty$. The boundary circle is a homotopy retract of $\wdS$, so we can think of $\cF$ as a $G$-local system on $\partial$. Since each component is contractible, the datum of $\phi_i$ is equivalent to giving a reduction of structure group of a $G$-torsor on a point. These data satisfy the Stokes condition, which is equivalent to having relative position $w_i$. To specify the local system on the circle, we only need to specify gluing maps across each Stokes direction. We thus make the following definition. 

Let $Y$ be any complex test scheme. Then we define $\cM_{B,\Theta}(Y)$ to be the groupoid whose objects are tuples $(E_1,\ldots,E_n)$ where each $E_i$ is a $P_i$-torsor on $Y$, together with isomorphisms $\iota_i \colon E_i \times^{P_i} G \cong E_{i+1}\times^{P_{i+1}} G$ for $i=1,\ldots,n-1$, and an isomorphism $\iota_n \colon E_n \times^{P_n} G \cong E_{1}\times^{P_{1}} G$, such that via these maps, the reductions of structure group are in relative position $w_i$. Recall that all $P_i$ have the same associate class. The following lemma easily follows from a description of $\cM_{B,\Theta}$ similar to \eqref{eqn:globalquotient}. Indeed, you can choose a trivialization of $E_1 \times^{P_1} G$, to obtain a trivialization for every $E_i \times^{P_i} G$, and hence a tuple of parabolic subgroups. Forgetting the trivialization amounts to quotienting by the diagonal action of $G$. 
\begin{lemma}
For any irregular class $\Theta$, $\cM_{B,\Theta}$ is an algebraic stack.
\end{lemma}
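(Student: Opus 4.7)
The plan is to realize $\cM_{B,\Theta}$ as a global quotient stack $[G\backslash \cM_{B,\Theta}^{\sharp}]$ for an explicit scheme $\cM_{B,\Theta}^{\sharp}$, in direct analogy with \eqref{eqn:globalquotient}. Concretely, define
\[
\cM_{B,\Theta}^{\sharp} = \{(B_1,\ldots,B_n,h)\in \cP_{1}\times\cdots\times \cP_{n}\times G \mid (B_i,B_{i+1})\in \cO(w_i)\text{ for } 1\le i<n,\ (B_n, {}^{h}B_1)\in \cO(w_n)\},
\]
where $\cP_i$ is the variety of parabolic subgroups conjugate to $P_i$ and $\cO(w_i)\subset \cP_i\times \cP_{i+1}$ (or $\cP_n\times \cP_1$) is the diagonal $G$-orbit labelled by $w_i\in W_i\backslash W/W_{i+1}$. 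Each $\cP_i$ is a smooth projective variety and each $\cO(w_i)$ is a locally closed $G$-invariant subvariety of the corresponding product, so $\cM_{B,\Theta}^{\sharp}$ is a locally closed subvariety of the smooth variety $\cP_{1}\times\cdots\times \cP_{n}\times G$; in particular it is an algebraic variety. The group $G$ acts by simultaneous conjugation on the $B_i$ and by conjugation on $h$.

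Next, I would identify $\cM_{B,\Theta}$ with $[G\backslash \cM_{B,\Theta}^{\sharp}]$ functorially. Given a family $(E_1,\ldots,E_n,\iota_1,\ldots,\iota_n)\in \cM_{B,\Theta}(Y)$, choose (fppf-locally on $Y$) a trivialization of the $G$-torsor $E_1\times^{P_1} G$; transport it via the $\iota_i$ to a trivialization of each $E_i\times^{P_i}G$. Under these trivializations, the reduction $E_i$ of $E_i\times^{P_i}G$ corresponds to a $Y$-point $B_i$ of $\cP_i$, and the cyclic isomorphism $\iota_n$ becomes a $G$-valued function $h$ on $Y$. The requirement that $\iota_i$ put the two reductions in relative position $w_i$ translates, by Lemma~\ref{lem:relpos} and the definition of $\cO(w_i)$, into the orbit conditions above. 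Changing the trivialization of $E_1\times^{P_1}G$ by $g\in G(Y)$ conjugates every $B_i$ simultaneously by $g$ and sends $h$ to $ghg^{-1}$, which is precisely the diagonal $G$-action on $\cM_{B,\Theta}^{\sharp}$. This yields an equivalence of stacks $\cM_{B,\Theta}\cong [G\backslash \cM_{B,\Theta}^{\sharp}]$.

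Since $G$ is a smooth affine algebraic group acting on the algebraic variety $\cM_{B,\Theta}^{\sharp}$, the quotient stack $[G\backslash \cM_{B,\Theta}^{\sharp}]$ is an Artin stack of finite type over $\CC$, completing the proof. There is no genuine obstacle: the only point requiring care is the bookkeeping for the cyclic gluing $\iota_n$, where one must check that changing the single chosen trivialization of $E_1\times^{P_1}G$ conjugates the monodromy-like element $h$ by the same element, so that the forgetful map from trivialized data is $G$-equivariant and descends to the desired equivalence.
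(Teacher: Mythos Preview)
Your proposal is correct and follows exactly the approach sketched in the paper: realize $\cM_{B,\Theta}$ as the quotient $[G\backslash \cM_{B,\Theta}^{\sharp}]$ by trivializing $E_1\times^{P_1}G$, transporting the trivialization along the $\iota_i$ to obtain a tuple of parabolics (plus the monodromy element coming from $\iota_n$), and observing that changing the trivialization amounts to the diagonal $G$-action. You have simply filled in the details the paper leaves implicit.
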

The stack of Stokes-filtered $G$-local systems $\cM_{B,\Theta}$ is equipped with a monodromy map
\[\cM_{B,\Theta} \to [G /\Ad(G)], \]
which encodes the monodromy around $0$ (sometimes called topological monodromy in this setting).

The direct relation to the space $\cM(\beta)$ can now be recovered as follows. Assume $\Theta$ is a toral irregular class. In this case, each relative position $w_i$ is an element in $W$. This way, we can attach a braid to $\Theta$ by defining
\[\beta_{\Theta} = \tilde{w}_{1}\cdot \ldots \cdot \tilde{w}_{n}.\]
Let $H = B/[B,B]$ be the universal Cartan of $G$. When $\Theta$ is toral, one may define a formal monodromy map $\cM_{B,\Theta} \to [H / \Ad_{w}(H) ]$, where $w$ is the image of $\beta_{\Theta}$ under the projection $\Br_{W} \to W$, and $\Ad_{w}$ denotes $w$-twisted conjugation. The topological monodromy and formal monodromy maps are constructed completely analogously to the definition in \cite[\S 4.1]{BBMY}.
\begin{prop}
    If $\Theta$ is toral, there is a natural isomorphism of stacks
    \[ \cM_{B,\Theta} \xlongrightarrow{\cong} \cM(\beta_{\Theta}) \]
    which is compatible with the monodromy maps.
\end{prop}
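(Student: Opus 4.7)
The plan is to exhibit the identification by directly matching the data defining the two stacks after unwinding definitions. First I would record that toral means the algebraic quotient $T$ of $\cT$ determined by $\Theta$ has Levi centralizer equal to a maximal torus, so each conjugacy class of parabolics attached to a component $\partial_i$ is the class of Borel subgroups. Hence each standard parabolic $P_i$ equals $B$, each $W_i = \{1\}$, and each relative position $w_i$ is a well-defined element of $W\cong \WW$ (not only a double coset). Consequently, the positive-braid lift $\tilde w_i \in \Br^+_W$ is unambiguous, and the product $\beta_\Theta = \tilde w_1 \cdots \tilde w_n$ is well-defined in $\Br^+_W$ (independence of the chosen enumeration of $\SS$ starting at $d_0$ reduces to a cyclic shift, which does not affect $\cM(\beta_\Theta)$ by Remark~\ref{rem:braidindep} together with the conjugation-invariance built into the quotient~\eqref{eqn:globalquotient}).

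Next I would write $\cM_{B,\Theta}$ as a global quotient in the same style as \eqref{eqn:globalquotient}. Namely, after choosing a trivialization of $E_1 \times^B G$ over $Y$ (allowed étale-locally), the isomorphisms $\iota_i$ translate into an $n$-tuple $(B_1, \ldots, B_n)\in \cB^n$ subject to $(B_i, B_{i+1})\in \cO(w_i)$ for $i=1,\dots,n-1$, together with an element $h\in G$ encoding $\iota_n$ through $B_n \xrightarrow{w_n} {}^h B_1$. Forgetting the trivialization corresponds to the diagonal $G$-action on $(B_1,\ldots,B_n)$ combined with conjugation on $h$. This exhibits $\cM_{B,\Theta}$ as the quotient stack of the variety
\[
    \{(B_1,\ldots,B_n,h) \in \cB^n \times G \mid (B_i,B_{i+1})\in \cO(w_i)\ (1\le i<n),\ (B_n, {}^{h}B_1) \in \cO(w_n)\}
\]
by $G$. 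By the remark following the definition of $\cM(\beta_\Theta)$, we may normalize objects of $\cM(\beta_\Theta)(Y)$ to have $E_n=E_0$ and $\tau=\id$, at the cost of incorporating $\tau$ into $\iota_{n-1}$; the resulting description matches \eqref{eqn:globalquotient} with $n$ Borel subgroups and the element $h$ coming from the (original) $\iota_{n-1}$ composed with $\tau^B$. Relabeling $(E_0,\ldots,E_{n-1}) \leftrightarrow (E_1,\ldots,E_n)$ gives a canonical isomorphism of quotient stacks $\cM_{B,\Theta}\cong \cM(\beta_\Theta)$, functorial in $Y$.

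Finally I would verify compatibility with the monodromy maps. Under the presentation above, both the topological and the formal monodromy maps are read off from the loop around $\partial$, that is, from the element $h$ (topological monodromy is $h$ viewed in $[G/\Ad(G)]$, while formal monodromy uses the identification $B_n = {}^h B_1$ together with the canonical isomorphism $B_1/U_1 \cong T$ to land in $[H/\Ad_w(H)]$). These are the exact definitions used in \cite[\S 4.1]{BBMY} for $\cM(\beta_\Theta)$, so both maps are intertwined by the identification constructed above.

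The main obstacle I anticipate is bookkeeping: making sure the cyclic ordering of the components $\partial_i$ relative to the basepoint $d_0$ produces the correct braid up to the ambiguity allowed by Remark~\ref{rem:braidindep}, and checking that the two normalizations (absorbing $\tau$ into $\iota_{n-1}$ on the braid side, choosing a trivialization of $E_1\times^B G$ on the Stokes side) are compatible in a way that respects both the topological and the formal monodromy simultaneously. All other steps are essentially formal unwinding of the Tannakian definitions recalled in the previous section.
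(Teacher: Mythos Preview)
Your proposal is correct and follows essentially the same approach as the paper: observe that toral forces each $P_i=B$ so the $w_i$ are honest Weyl group elements, then match the tuple of $B$-torsors with gluing isomorphisms defining $\cM_{B,\Theta}$ directly to the data of $\cM(\beta_\Theta)$ in its normalized form $(E_n=E_0,\ \tau=\id)$, and read off monodromy compatibility. The paper's proof is much terser and works directly at the level of the moduli functors rather than passing through the global quotient presentation~\eqref{eqn:globalquotient} as you do, but the underlying identification is identical.
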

\begin{proof}
    Since $\Theta$ is toral, a Stokes-filtered $G$-local system of type $\Theta$ is precisely given by a tuple $(E_0,\ldots,E_{n-1})$ of $B$-torsors (on each sector of $\partial\setminus\SS)$ in relative position $w_i$. We define $E_n=E_0$, and $\tau = \id$. Moreover, we have isomorphisms $\iota_{i}\colon E_i \times^{B} G\to E_{i+1} \times^{B} G$ obtained from the reduction of structure group of the $G$-local system on each sector. It is clear that this is an isomorphism and is compatible with monodromy. 
\end{proof}

\subsection{Isoclinic irregular classes}

Recall that $T\subset G$ is a maximal torus, so that $\frt\subset \frg$ is a Cartan subalgebra, and let $D_{\infty}^{\times} = \Spec \CC(\!(t)\!)$. We recall the notion of isoclinic irregular type from \cite[\S 2.1]{JY}. Let $\CC[t^{\QQ_{<0}}]$ be the set of $\CC$-linear combinations of monomials $t^{a}$ with $a\in \QQ_{<0}$, and let $\frt[t^{\QQ_{<0}}] = \frt \otimes_\CC \CC[t^{\QQ_{<0}}]$. Then, an element $X\in \frt[t^{\QQ_{<0}}]$ is called an isoclinic irregular type of slope $\nu \in \QQ_{>0}$ if 
\begin{enumerate}
    \item the lowest degree term $X_{-\nu}\in \frt$ of $X$ is regular semisimple, and
    \item there exists $n\in \ZZ_{> 0}$ and a formal connection $(E,\nabla)$ over $D_{\infty}^{\times}$ such that $X\in \frt[t^{-1/n}]$ and the pullback of $(E,\nabla)$ to the $n$-fold cover of $D_{\infty}^{\times}$ is isomorphic to a connection of the form 
\[d+ (X+\frg[\![t^{1/n}]\!]) \frac{dt}{t}. \]
\end{enumerate}
A formal connection $(E,\nabla)$ on $D_{\infty}^{\times}$ is isoclinic of slope $\nu$ if there is an integer $n$ such that $(E,\nabla)$ takes the above form on the $n$-fold covering. The denominator of $\nu$ (written in lowest terms) is necessarily a regular number for $G$. An isoclinic irregular class is the irregular class of an isoclinic connection. Explicitly, they are given as follows.

Let $\gt = \frg\otimes_{\CC} \CC(\!(t)\!)$, and let $\dt\subset \gt$ be a Cartan subalgebra, possibly non-split. Let $w$ be a root of $t$ such that $\dt$ splits in $\gt \otimes_{\CC} \CC(\!(w)\!)$, i.e.\ $\dt \cong \frt(\!(w)\!)$. The $w$-adic filtration on $\frt(\!(w)\!)$ induces a canonical filtration on $\dt$, and we denote by $\dt_{<0}$ its associated graded in negative degrees. Let $m$ be a regular number for $G$, and let $d\in \ZZ_{<0}$ be coprime to $m$. Recall that an element in $\dt/\dt_{\ge 0}$ determines an irregular class, and vice versa every irregular class is of this form for some Cartan, cf.\ \cite[\S 3.5]{BY}.

\begin{defn}
We say that $\Theta$ is an \textit{isoclinic irregular class of slope $\nu=\frac{d}{m}$} (with $d,m$ coprime) if there is a Cartan $\dt\subset \gt$, split by adjoining $w=t^{1/m}$, and an $X\in \dt_{<0}$ which is conjugate to an isoclinic irregular type
\[X_{-d/m}t^{-d/m}+X_{-(d-1)/m}t^{-(d-1)/m}+\ldots+X_{-1/m}t^{-1/m}\in \frt(\!(w)\!)\] 
with $X_{-d/m}$ being regular semisimple, such that $\Theta$ corresponds to the class of $X$ in $\dt/\dt_{\ge 0}$. Note that every isoclinic class is automatically toral. We call an isoclinic class \textit{elliptic} if moreover $m$ is a regular elliptic number for $G$. 
\end{defn}

\begin{remark}
    Let $D_{\infty}^{\times} = \Spec \CC(\!(t)\!)$, and assume that $G$ is semisimple and $m$ is elliptic. In the proof of \cite[Lemma 2.10]{JY} it is shown that in this case there is up to isomorphism a unique isoclinic $G$-connection of slope $\nu = d/m$ on $D_{\infty}^{\times}$ with fixed isoclinic irregular type. 
\end{remark}
The following is explained in \cite[\S 4.4]{BBMY}. 
\begin{lemma}\label{lem:isoclinicbraid}
Let $\Theta=\Theta_{\nu}$ be an isoclinic irregular type of slope $\nu = d/m$, written in lowest terms. Assume $m$ is regular elliptic. Then, up to a cyclic shift of words, $\beta_{\nu}\defeq\beta_{\Theta} = \tilde{w}^d$, for a regular elliptic element $w$ of minimal length in its conjugacy class. 
\end{lemma}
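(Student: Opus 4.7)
The plan is to compute the Stokes structure of $\Theta_{\nu}$ on the boundary circle $\partial$ directly from the isoclinic normal form of $X$, and then read off the braid $\beta_{\Theta}$ from the resulting sequence of relative positions.

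First, I would pull back along the $m$-fold cover $t=u^m$, which splits the Cartan $\dt$: after pullback, $X$ becomes $\sum_{i=1}^{d} X_{-i/m}\,u^{-i}\in\frt(\!(u)\!)$ with $X_{-d/m}\in\frt$ regular semisimple. The deck transformation $u\mapsto \zeta_m u$ of $\mu_m$ acts on $\frt$ through a regular element $w\in W$ of order $m$, well defined up to $W$-conjugacy by Springer's theory of regular elements. Descent of $X$ forces $w(X_{-d/m})=\zeta_m^{-d}\,X_{-d/m}$, and this $w$ will be the regular elliptic element appearing in the statement.

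Next, I would enumerate the Stokes directions of $\Theta$ and identify the associated simple reflections. The non-zero weights of the adjoint action on $\frg$ are Galois orbits of the linear forms $\alpha\cdot X$ for $\alpha\in\Phi$, whose leading terms upstairs are $\alpha(X_{-d/m})\,u^{-d}$. A Stokes direction for such an orbit is one at which its leading monomial becomes purely imaginary; for a generic choice of $X_{-d/m}$, the Stokes directions attached to distinct orbits are pairwise distinct, and at each of them the relative position of the two adjacent exponential dominance orders is the reflection $s_{\alpha}$ in the sense of Lemma~\ref{lem:relpos}.

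Then, I would assemble the braid. One counter-clockwise tour of $\partial$ pulls back to $m$ tours of the covering circle, during which each monomial $\alpha(X_{-d/m})u^{-d}$ crosses imaginary position $2d$ times, yielding $d\,|\Phi|$ Stokes directions upstairs per tour, hence $d\,|\Phi|/m$ directions downstairs. These group naturally on $\partial$ into $d$ blocks, each block being a single sweep through all root orbits ordered by $\arg\alpha(X_{-d/m})$. The positive braid contributed by a single block is a positive braid lift of a reduced expression for a minimal-length representative of the conjugacy class of $w$; this is the key input from the Bessis–Broué–Michel analysis of regular elliptic conjugacy classes, combined with Springer's theory of regular elements. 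The $d$ blocks then multiply to $\tilde w^d$, and the choice of basepoint $d_0\in\partial$ only affects which block is read first, accounting for the ``up to cyclic shift'' clause; by Remark~\ref{rem:braidindep} this cyclic ambiguity is immaterial for the associated braid stack $\cM(\beta_{\Theta})$.

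The main obstacle is precisely this last identification: one must verify that the ordered sequence of reflections contributed by a single block is a \emph{reduced} expression for a minimal-length representative of the regular elliptic class, rather than merely some word evaluating to an element of the correct conjugacy class. This boils down to combining the angular combinatorics of $\arg\alpha(X_{-d/m})$, as $\alpha$ ranges over the root system of the $w$-equivariant Cartan, with the length formulas of Geck–Pfeiffer for minimal-length representatives in elliptic classes. Once this step is in place, the rest is bookkeeping concerning orientations and cyclic shifts.
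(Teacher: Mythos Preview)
The paper does not give a proof of this lemma; it refers to \cite[\S 4.4]{BBMY}. Your outline is the natural direct computation and is essentially what one finds there.

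Two sharpenings. First, at a Stokes direction where a single root-pair $\{\alpha,-\alpha\}$ flips, the relative position of the two adjacent Borels is a \emph{simple} reflection in $\WW$ (two Borels differing by a single wall always have relative position of length one), so each $w_i$ contributes length one to $\beta_\Theta$. Your genericity hypothesis on $X_{-d/m}$ is then unnecessary: if several pairs flip simultaneously at some direction, the relative position there has length equal to the number of flipped pairs, and its canonical braid lift agrees with the braid obtained from any infinitesimal separation.

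Second, the obstacle you flag is real but can be organised more cheaply than a direct angular verification against Geck--Pfeiffer tables. Only the leading term $X_{-d/m}t^{-d/m}$ governs the Stokes directions and relative positions, and rotation of $\partial$ by $2\pi/d$ sends the Borel at $\theta$ to a conjugate (by a fixed power of $w$) of the Borel at $\theta+2\pi/d$; hence the sequence of relative positions is literally $d$-periodic and $\beta_\Theta$ is the $d$-th power of a single block. That block is a positive word of length $|\Phi|/m$ in the simple generators. Showing this word is reduced, with image a minimal-length element of the regular elliptic class of order $m$, is precisely the statement that such elements have length $|\Phi|/m$; this is where Springer's theory of regular elements (or, equivalently, Bessis's description of roots of the full twist in \cite{Be}) enters. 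So your ``main obstacle'' boils down to quoting that one length formula rather than checking reducedness by hand.
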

Note that two braids which differ only by a cyclic shift of words give rise to isomorphic $\cM(\beta)$.

\section{Physical rigidity of Airy and Kloosterman connections}\label{s:rigidity}

\subsection{Kloosterman connections} Assume for the rest of the paper that $G$ is a simple complex group with Coxeter number $h$. Let $\Sigma = \PP^1$, and let $\ba = \{0,\infty\}$. We will be interested in algebraic connections on $\Sigma \setminus \ba$ which are regular singular at $0$, and have slope $1/h$ at $\infty$. 

\begin{defn}
    A Kloosterman $G$-connection (or generalized Frenkel--Gross connection) is an algebraic $G$-connection on $\PP^1 \setminus \{0,\infty\}$ which is isoclinic of slope $1/h$ at $\infty$, and regular singular at $0$. 
\end{defn}
These connections were first constructed as de Rham analogues of $\ell$-adic Kloosterman sheaves in \cite{HNY}. They generalize the Frenkel--Gross connection in \cite{FG}, and can be written down explicitly, see for example \cite{KS}. The local monodromy of a Kloosterman $G$-connection at $0$ is automatically regular. Before we study them further, we explain some generalities which allow us to describe their moduli explicitly. 

\subsection{Generalized Steinberg sections}

Let $w\in W$ be an elliptic element of minimal length in its conjugacy class, and denote by $T^{w}$ the fixed points in $T$ for the natural adjoint action of $w$ on $T$. We fix a lift of $w$ in $N(T)$ which we denote by the same symbol. 

Let $\Sigma_{w} = (U\cap wU^{-}w^{-1}) w = Uw \cap wU^{-}$. Note that there is an action of $T^{w}$ on $\Sigma_{w}$ by conjugation (because $T$ normalizes $U$ and $U^{-}$, and every $t \in T^{w}$ is fixed by $w$).

The following statements are the main results of He and Lusztig's paper \cite{HL}:
\begin{enumerate}
    \item The conjugation action of $U$ on $UwU$ is free. 
    \item The subset $\Sigma_{w}$ meets every $\Ad(U)$-orbit on $UwU$ in a single point. 
    \item To summarize, the map $\Sigma_{w} \to UwU / \Ad(U)$ identifies the latter quotient with an affine space of dimension $\ell(w)$.
\end{enumerate}
Here recall that $\ell(w)$ is the number of positive roots made negative by $w$ or equivalently the number of positive roots made negative by $w^{-1}$, and the latter are precisely the roots occuring in $U \cap wU^{-}w^{-1}$. 

When $w$ is the lift of a Coxeter element, $\Sigma = \Sigma_{w}$ is the usual Steinberg cross-section. The name cross-section is explained by the following result. 

\begin{theorem}[\cite{St}]\label{thm:steinberg} Let $w$ be a Coxeter element. When $G$ is simply connected, $\Sigma$ is a cross-section for the regular conjugacy classes in $G$. That is, $\Sigma$ intersects every regular conjugacy class in a unique point. 
\end{theorem}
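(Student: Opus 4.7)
The plan is to prove Steinberg's theorem by relating $\Sigma$ to the Chevalley morphism $\chi\colon G\to G/\!/G$. Two facts drive the argument: (i) every element of $\Sigma$ is regular, and (ii) when $G$ is simply connected, $\chi$ restricted to $\Sigma$ is an isomorphism onto $G/\!/G\cong\AA^r$. The classification of regular classes as fibers of $\chi$ on the regular locus then yields the cross-section property, and a mild variant handles the regular unipotent class in general.

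For (i), I would first observe that $\dim\Sigma=r$, the rank of $G$, since multiplication induces an isomorphism of varieties $X_1\times\cdots\times X_r\xrightarrow{\sim}\Sigma$ (the $\dot w_i$ being fixed elements). For any $g=x_1(a_1)\dot w_1\cdots x_r(a_r)\dot w_r\in \Sigma$, the image in $N_G(T)/T$ is the Coxeter element $c=s_1\cdots s_r$, which is a regular element of $W$. A standard twisted-conjugation argument then gives $\dim Z_G(g)\le r$: any semisimple part of a centralizing element lies in a $c$-stable maximal torus, and regularity of the Coxeter element forces this torus to contain the semisimple part only through central contributions, keeping the centralizer small enough to match $r$.

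For (ii), recall that for $G$ simply connected, $G/\!/G\cong\AA^r$ with coordinates $\chi_1,\ldots,\chi_r$ given by traces on the fundamental representations $V(\varpi_i)$ (Chevalley's restriction theorem). The key computation is to evaluate
\[
F\colon \AA^r\to \AA^r,\qquad (a_1,\ldots,a_r)\mapsto \bigl(\chi_1(g),\ldots,\chi_r(g)\bigr),
\]
where $g$ is the element of $\Sigma$ with parameters $a_i$. Expanding in a weight basis of $V(\varpi_i)$ and computing how the word $x_1(a_1)\dot w_1\cdots x_r(a_r)\dot w_r$ acts on the lowest weight vector, one isolates the unique path that lands on the highest weight vector and reads off that the contribution of the highest weight to $\chi_i(g)$ is a nonzero scalar multiple of $a_i$, plus polynomial corrections depending only on $a_1,\ldots,a_{i-1}$. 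This upper-triangular structure (with nonzero diagonal) shows that $F$ is an isomorphism of affine spaces, and hence $\chi|_\Sigma\colon \Sigma\xrightarrow{\sim} G/\!/G$.

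Combining (i) and (ii), every regular conjugacy class in a simply connected $G$ meets $\Sigma$ in exactly one point, because regular classes coincide with the fibers of $\chi$ over the image of the regular locus. For the regular unipotent case without the simply-connectedness assumption, I would pass to the simply connected cover $\widetilde G\twoheadrightarrow G$: lift the $\dot w_i$ to $\widetilde G$ to produce a Steinberg section $\widetilde\Sigma$ there, apply the simply connected case to conclude $\widetilde\Sigma$ meets the regular unipotent class of $\widetilde G$ in a unique point, and then observe that the regular unipotent locus projects isomorphically under $\widetilde G\to G$ and that the image of $\widetilde\Sigma$ is exactly $\Sigma$. The main obstacle is the triangularity calculation for $\chi_i$ on $\Sigma$, which requires a careful tracking of how each $\dot w_j$ transports weight vectors within $V(\varpi_i)$ and how the unipotent perturbations $x_j(a_j)$ introduce the parameters $a_j$ one at a time.
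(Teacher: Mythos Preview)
The paper does not prove this theorem; it is quoted from Steinberg's original article \cite{St}, so there is no ``paper's own proof'' to compare against. Your outline follows Steinberg's strategy (regularity of $\Sigma$ plus the triangular computation of the fundamental characters on $\Sigma$), and parts (ii) and the descent to non-simply-connected $G$ are fine in spirit.

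There is, however, a genuine gap in your argument for (i). You write that for $g=x_1(a_1)\dot w_1\cdots x_r(a_r)\dot w_r$, ``the image in $N_G(T)/T$ is the Coxeter element $c$''. But $g\notin N_G(T)$ as soon as some $a_i\ne 0$, so this sentence has no meaning; at best $g$ lies in the Bruhat cell $BcB$, which is a much weaker statement. The subsequent sketch (``any semisimple part of a centralizing element lies in a $c$-stable maximal torus'') does not follow from that, and is in any case too vague to constitute an argument: there is no action of $c\in W$ on $G$ singling out such a torus. The actual proof of regularity (Steinberg, Lemma~7.3) uses the alternative description $\Sigma=X_w\dot w\subset U\dot w$ recalled in the paper, together with the ellipticity of the Coxeter element (so that $t\mapsto t\,\dot w t^{-1}\dot w^{-1}$ is an isogeny $T\to T$) and the freeness of the $U$-conjugation on $U\dot w U$; from these one bounds $\dim Z_B(g)$ and then $\dim Z_G(g)$. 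Alternatively, once (ii) is established one can deduce regularity a posteriori by a dimension count on the conjugation map $G\times\Sigma\to G$. Either way, your current paragraph for (i) needs to be replaced by one of these arguments.
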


Moreover, to handle the general case when $G$ is not necessarily simply connected, we prove the following. 

\begin{theorem} \label{thm:Twtrans}
Let $w$ be a Coxeter element. For any regular class $\cC$, the group $T^{w}$ acts transitively on $\cC\cap \Sigma$.
\end{theorem}

\begin{proof}
Let $\pi\colon G^{\sc} \to G$ be the simply connected (universal) cover of $G$. The map $\pi$ is a central isogeny, and we denote its kernel by $K \subset Z(G^{\sc})$. Note that $K \cong \pi_{1}(G)$.

Let $\Sigma^{\sc}$ be the Steinberg section in $G^{\sc}$ defined using a lift of $w$ in $G$ to $G^{\sc}$. Then $\pi(\Sigma^{\sc}) = \Sigma$. Let $\cC$ be a regular conjugacy class in $G$. 

Note that the restriction of $\pi$ to $\Sigma^{\sc}$ is injective. Indeed, if $s,s'\in \Sigma^{\sc}$ with $\pi(s) = \pi(s')$, it means $s' = ks$ for some $k \in K$. Since $\Sigma^{\sc}= Uw \cap wU^{-}$, we can write $s = uw$ and $s' = vw$. This implies $vw = kuw$, hence $k = vu^{-1}\in U$. However, since $k$ is central, it is in particular semisimple, so $k=1$ and $s'= s$.

We describe the intersection $\cC \cap \Sigma$. Fix $y_1 \in \cC \cap \Sigma$, then we can find a unique $x_1 \in \Sigma^{\sc}$ with $\pi(x_1) = y_1$. 

Note that $x_1$ is regular in $G^{\sc}$, and so is $kx_1$ for every $k\in K$. We can thus define $x_k \in \Sigma^{\sc}$ to be the unique element in $\Sigma^{\sc}$ conjugate to $kx_1$ (cf.\ Theorem~\ref{thm:steinberg}). The $x_k$ need not be pairwise distinct. In fact, the map 
\[\{x_k \mid k \in K \} \to \{G^{\sc}.x_{k} \mid k \in K \} \] 
is a bijection onto the set of conjugacy classes in $G^{\sc}$ lying over $\cC$: It is easily checked that the latter are exactly the conjugacy classes of the elements $kx_1$ for $k\in K$. Moreover, by definition, the points $x_k$ and $x_{k'}$ coincide exactly when $kx_1$ is conjugate to $k'x_1$. The set of conjugacy classes in $G^{\sc}$ lying over $\cC$ is a $\pi_1(G)/\pi_0(C_G(y_1))$-torsor by \cite[Lemma 2.2.5]{JY2}. Under the above identification $\pi_1(G)\cong K$, the subgroup $\pi_0(C_G(y_1))\subset \pi_1(G)$ corresponds to the subgroup 
\[K(x_1) = \{k \in K \mid kx_1 \textup{ is conjugate to } x_1 \} \subset K,\]
which is the stabilizer of the $K$-action on the conjugacy class of $x_1$. Indeed, there is a map 
\[ \rho\colon C_G(y_1) \to K \]
defined as follows. For $g\in C_G(y_1)$ pick a lift $\tilde{g} \in G^{\sc}$. Then 
\[\pi(\tilde{g} x_1 \tilde{g}^{-1}) = gy_1g^{-1} = y_1 = \pi(x_1), \]
so $\tilde{g} x_1 \tilde{g}^{-1} = k(g) x_1$ for a unique $k(g) \in K$, and we set $\rho(g)\coloneq k(g)$. This construction is well-defined by centrality of $K$, and one checks that it is a homomorphism. Moreover, its image is precisely $K(x_1)$. 

The kernel of $\rho$ consists of those $g \in C_G(y_1)$ which admit a lift that centralizes $x_1$, i.e.\ $\ker(\rho) = \pi(C_{G^{\sc}}(x_1))$. Centralizers in simply connected groups are connected, so $\ker(\rho)$ is connected and therefore $\ker(\rho) \subset C_G(y_1)^{\circ}$. On the other hand, the target of $\rho$ is finite, so actually $\ker(\rho) = C_G(y_1)^{\circ}$ and $\rho$ induces an isomorphism $\pi_0(C_G(y_1)) \cong K(x_1)$. So the translation action of $K$ on $\{G^{\sc}.x_{k} \mid k \in K \}$ makes this set a $K/K(x_1)$-torsor.

We claim that 
\[\cC \cap \Sigma = \{\pi(x_k) \mid k\in K \}. \]
To see this, note first that $x_k = zkx_1z^{-1}$ for some $z\in G^{\sc}$, so $\pi(x_k) = \pi(z)y_1 \pi(z)^{-1}$ and $\pi(x_k)\in \cC$. On the other hand, $x_k \in \Sigma^{\sc}$ implies $\pi(x_k) \in \Sigma$. Therefore $\pi(x_k) \in \cC \cap \Sigma$. 

Conversely, if $y \in \cC \cap \Sigma$, we can find $x \in \Sigma^{\sc}$ with $\pi(x) = y$. Moreover, $y$ is conjugate to $y_1$, so
\[\pi(x) = z y_1 z^{-1} = \pi(z') y_1 \pi(z')^{-1} = \pi(z' x_1 (z')^{-1} ) \]
for some $z \in G$ and $z' \in G^{\sc}$ a preimage of $z$. This means that
\[x = kz' x_1 (z')^{-1} =z' kx_1 (z')^{-1}\]
for some $k \in K$ and by definition, this implies $x = x_k$.

Since $\pi$ is injective on $\Sigma^{\sc}$, we see that $\cC \cap \Sigma$ is in bijection with the set of conjugacy classes in $G^{\sc}$ lying over $\cC$. In particular, it is a $K/K(x_1)$-torsor.

We describe the action of $T^{w}$ under this identification. For $t \in T^{w}$, pick a lift $\tilde{t} \in T^{\sc} \subset G^{\sc}$. Then $\pi(w^{-1}(\tilde{t}) \tilde{t}^{-1}) = 1$, since $w$ fixes $t \in T$. Therefore $w^{-1}(\tilde{t}) \tilde{t}^{-1} \in K$, and we obtain a homomorphism
\[\d \colon T^{w}\to K, t \mapsto \Ad_{w^{-1}}(\tilde{t}^{-1}) \tilde{t},\]
which is well-defined since two lifts of $t$ differ by a central element. 

Now we compute the action of $T^{w}$ on $\cC \cap \Sigma$. It is enough to do so on $y_1 = \pi(x_1)$. Write $y_1 = u_1 w \in Uw$. Then for $t \in T^{w}$ we have $ty_1t^{-1} = tu_1 t^{-1} w$. Let $\tilde{t}$ be a lift of $t$, and $\tilde{u}_{1}$ a lift of $u_{1}$. Then $x_1 = \tilde{u}_1w$ is the unique lift of $y_1$ in $\Sigma^{\sc}$, and $\tilde{t} \tilde{u}_{1} \tilde{t}^{-1} w \in \Sigma^{\sc}$ is the unique lift of $ty_{1}t^{-1}$ in $\Sigma^{\sc}$. Note that 
\[ \tilde{t} \tilde{u}_{1} \tilde{t}^{-1} w = \tilde{t} \tilde{u}_{1}w \delta(t)\tilde{t}^{-1} . \]
Since $\delta(t)$ is a central element, this means that $\tilde{t} \tilde{u}_{1} \tilde{t}^{-1} w \in \Sigma^{\sc}$ is conjugate to $\delta(t) x_1$. Therefore, the action of $T^{w}$ translates into multiplication by $\delta(t)$ under the identification $\cC \cap \Sigma \cong K/K(x_{1}) $ induced by the choice of $y_1 \in \cC \cap \Sigma$.

Consequently, in order to prove that the action is transitive, it suffices to show that $\delta\colon T^{w}\to K$ is surjective. Consider the exact sequence 
\[ 1 \to K \to T^{\sc} \to T \to 1. \]
Denote by $\G = \langle w \rangle$ the group generated by $w$. This is a cyclic group of order $h$. The above sequence induces a long exact sequence in group cohomology
\[1 \to K \to (T^{\sc})^w \to T^w \to H^1(\G, K) \to H^1(\G, T^{\sc}) \to ... \]
Here $\G$ acts trivially on $K$. We claim that $H^1(\G, T^{\sc}) = 0$. 

To see this, consider the exponential sequence
\[0 \to X_*(T^{\sc}) \to \frt^{\sc}\to T^{\sc} \to 1. \]
Since $\frt^{sc}$ is a vector space, $H^1(\G, \frt^{\sc}) = 0$, as taking invariants is exact for representations of finite groups. Using the long exact sequence associated to the exponential sequence, we conclude 
\[H^1(\G, T^{\sc}) = H^2(\G, X_*(T^{\sc})).\]
Moreover, $Q^{\vee} = X_*(T^{\sc})$ is the coroot lattice for $G$. For cyclic groups, a standard result tells us that
\[H^2(\G, Q^{\vee}) = (Q^{\vee})^{\G}/ NQ^{\vee} \]
where $N=1+w+...+w^{h-1} \in \ZZ[\G]$. Since $w$ is a Coxeter element, it is elliptic, so it acts without fixed points in the reflection representation. In particular, $(Q^{\vee})^{\G} = 0$, so $H^1(\G, T^{\sc}) = H^2(\G, Q^{\vee})=0$. 

We conclude that $T^w \to H^1(\G,K)$ is surjective. It is standard that this connecting homomorphism can be computed as follows (see \cite[Remark 1.21]{Milne}). For $t \in T^{w}$, pick a lift $\tilde{t} \in T^{\sc}$. Then the image of $t$ under the connecting homomorphism is the class of the cocycle
\[\G \to K, g \mapsto (g.\tilde{t}) \tilde{t}^{-1}.\]

Moreover, since $\G$ acts trivially on $K$, we can identify 
\[H^1(\G,K) = \Hom(\G,K).\]
Finally, since $\G$ is cyclic, we can pick the generator $w^{-1} \in \G$ to get a further identification
\[ \Hom(\G,K) \cong K[h] := \{k \in K \mid k^h =1 \},\]
by mapping $\phi \mapsto \phi(w^{-1})$. Under this isomorphism, the connecting homomorphism is identified with $T^{w} \to K[h], t \mapsto \delta(t^{-1})$. Since the connecting homomorphism is surjective onto $K[h]$, so is $\delta$.

To finish the proof, we note that for all simple groups $G$, the exponent of $K$ (i.e.\ the minimal number $k$ such that $x^k=1$ for all $x \in K$) divides the Coxeter number $h$. This implies that $K[h] = K$. 
Indeed, all possibilities for $K$ are subgroups of $Z(G^{\sc})$. For convenience, we list the possible centers in Table \ref{table:centers}, together with the Coxeter number $h$. One readily verifies that every possible exponent divides $h$.

\begin{table}[]
\begin{tabular}{l | l | l || l | l | l  }
\textup{Dynkin type} 		& 	$Z(G^{\sc})$  					&	 $h$						&  	\textup{Dynkin type} 	& 	$Z(G^{\sc})$ 	&  $h$			\\ 
\hline
$A_{n-1}$ 			&  	$\ZZ/n\ZZ$					& $n$	&  	$E_6$			&  	$\ZZ/3\ZZ$		&	$12$									\\
$B_{n}$				&  	$\ZZ/2\ZZ$					& $2n$	&  	$E_7$			&  	$\ZZ/2\ZZ$		&	$18$									\\
$C_{n}$				&  	$\ZZ/2\ZZ$					& $2n$	& 	$E_8$			&  	$1$		&	$30$									\\
$D_{n}$, $2\mid n$		&  	$\ZZ/2\ZZ\times \ZZ/2\ZZ	$					& $2(n-1)$&  	$F_4$			&    	$1$		&	$12$									\\
$D_{n}$, $2 \nmid n$	&  	$\ZZ/4\ZZ$		& $2(n-1)$&	$G_2$  			& 	$1$		&     $6$
\end{tabular}
\caption{Centers of simply connected simple groups}
\label{table:centers}
\end{table}

We have thus shown that $\delta\colon T^{w} \to K$ is surjective, and hence that the action of $T^{w}$ on $\cC \cap \Sigma$ is transitive.
\end{proof}

The generalized Steinberg section can be used to describe the braid space $\cM(\tilde{w})$ as follows. 

\begin{theorem}\label{thm:ellipticbraidstack}
    For an elliptic $w$ as above, there is an isomorphism of algebraic stacks
    \[\cM(\widetilde{w}) \cong [\Sigma_{w} / T^{w}]. \]
    
\end{theorem}

\begin{proof}

We first claim that there is an isomorphism of stacks 
\[[UwU / T^{w}U] \cong [BwB / B].\]

This can be seen as follows: Conjugation induces a map 
\[ B \times UwU \to BwB, (b,x) \mapsto bxb^{-1}. \]
This map factors through the quotient $B \times^{T^{w}U} UwU$, so we get a morphism
\[ B \times^{T^{w}U} UwU \to BwB, \]
which is equivariant for the left translation action of $B$ on $B \times^{T^{w}U} UwU$ and the conjugation action of $B$ on $BwB$.

First we check that $B \times UwU \to BwB$ is surjective. Indeed, when $w$ is elliptic, $T^{w}$ is finite. This means that the map
\begin{equation}
    T \to T, \quad t\mapsto t\Ad_{w}(t^{-1})
\end{equation}
is surjective, since it is a morphism between two tori of the same rank with finite kernel. 

Now given an element of $BwB = TUwU$, say $t'u_1wu_2$, we can find $t\in T$ such that $t' = t\Ad_{w}(t^{-1})$. So we have
\begin{equation}
    t'u_1wu_2 = t\Ad_{w}(t^{-1})u_1wu_2 = tu_{1}'wu_{2}'t^{-1}
\end{equation}
for $u_1',u_2'\in U$. Here we use the facts that $W$ normalizes $T$, and $T$ normalizes $U$. This proves that every element of $BwB$ is $B$-conjugate (in fact $T$ suffices) to an element of $UwU$, that is $t'u_1wu_2$ is the image of $(t,u'_1wu'_2)$.

To prove that the induced morphism $B \times^{T^{w}U} UwU \to BwB$ is an isomorphism, we use the following.

\begin{lemma}\label{lemma:fp}
    Let $w \in W$ (not necessarily elliptic). Let $u_1,u_2,v_1,v_2\in U$, and assume $(tu)(u_1wu_2)(tu)^{-1} = v_1wv_2$ for some $u\in U$. Then $t\in T^{w}$. 
\end{lemma}
\begin{proof}
    First we have 
    \begin{equation}
        v_1wv_2 = tuu_1wu_2u^{-1}t^{-1}. 
    \end{equation}
    You can move $t^{-1}$ to the left to get 
    \begin{equation}
        t\Ad_{w}(t^{-1}) u_1'wu_2' = v_1wv_2,
    \end{equation}
    for some $u_1', u_2' \in U$ (here we use that $T$ normalizes $U$, and $w$ normalizes $T$). So it suffices to prove that if $tu_1wu_2=v_1wv_2$, we need to have $t=1$. 

This is proved as follows. If $tu_1wu_2=v_1wv_2$, then 
\begin{equation}
    \tilde{u}=t\Ad_{t^{-1}}(v_{1}^{-1})u_1 = \Ad_{w}(v_2u_2^{-1}).
\end{equation}
Note that 
  $\Ad_{t^{-1}}(v_{1}^{-1})u_1 \in U$ and  $u_2'=\Ad_{w}(v_2u_2^{-1}) \in wUw^{-1}$. Now we have
  \begin{equation}
      t\Ad_{t^{-1}}(v_{1}^{-1})u_1 \in TU = B, 
  \end{equation}
  i.e.\ $\tilde{u} \in B$. However, $\tilde{u}$ is unipotent, so actually it lies in $U$. This means 
  \[t\Ad_{t^{-1}}(v_{1}^{-1})u_1 = \tilde{u} \in U,\]
so $t\in U$. This implies $t=1$. 
\end{proof}
This immediately implies $B \times^{T^{w}U} UwU \cong BwB$, and we get
\[[UwU / T^{w}U] \cong [BwB / B].\]
The second claim is that $[\Sigma_{w} / T^{w}]\cong [UwU / T^{w}U]$. For this consider the map
\[T^{w}U \times \Sigma_{w} \to UwU, (b,x )\mapsto bxb^{-1}, \]
which is clearly $T^{w}U$-equiviarant. We claim it induces an isomorphism 
\[T^{w}U \times^{T^{w}} \Sigma_{w} \to UwU. \]

This follows from the results of \cite{HL}: The map $T^{w}U \times \Sigma_{w} \to UwU$ is surjective, because $\Sigma_{w}$ intersects every $U$-orbit on $UwU$.
Furthermore, if $u_1w$ is conjugate to $u_2w$ by $tu \in T^{w}U$, we have 
\begin{equation}
    u_2w = tuu_1wu^{-1}t^{-1} = \Ad_{t}(u) \Ad_{t}(u_1) w \Ad_{t}(u^{-1}).
\end{equation}
It is clear that $\Ad_{t}(u_1) w \in \Sigma_{w}$. Hence, because $\Sigma_{w}$ intersects every orbit in a unique point, we see that $u_2 = \Ad_{t}(u_1)$. Moreover, since the action of $U$ on $UwU$ is free, we find that $u=1$. This proves the claim.
\end{proof}

\subsection{Physical rigidity of Kloosterman connections}
Recall that a $G$-connection $\cE$ on $\PP^1\setminus S$, $S$ finite, is called physically rigid if any other $G$-connection on $\PP^1\setminus S$ with isomorphic formal types at every $x \in S$ is isomorphic to $\cE$. 

\begin{theorem}\label{thm:KloostermanPhysicalRigidity}
Kloosterman $G$-connections are physically rigid. That is, for any regular conjugacy class $\cC$, and any isoclinic irregular class $\Theta_{h}$ of slope $1/h$, there is a unique $G$-connection on $\PP^1\setminus \{0,\infty\}$ which has irregular class $\Theta_{h}$ at $\infty$, and which is regular singular at $0$ and whose local monodromy at $0$ lies in the conjugacy class $\cC$.
\end{theorem}
Note that at first glance this seems stronger than physical rigidity, because we are not fixing the formal monodromy at $\infty$. However, being isoclinic of slope $1/h$ forces the formal monodromy to be uniquely determined (up to twisted conjugacy).
\begin{proof}
By Theorem \ref{thm:Comp}, a $G$-connection is completely determined by its associated Stokes-filtered $G$-local system. Moreover, by Lemma \ref{lem:isoclinicbraid}, the moduli stack of Stokes-filtered $G$-local systems with irregular class $\Theta_{h}$ at $\infty$ and which are regular singular at $0$ with local monodromy in the regular class $\cC$ is
\[\cM(\beta_{1/h}, \cC) = \mu^{-1}(\cC),\]
where $\mu\colon\cM(\beta_{1/h}) \to [G/\Ad(G)]$ is the monodromy map and $\beta_{1/h} = \tilde{w}$ for a Coxeter element $w$. In this situation, $\Sigma_{w}$ is Steinberg's cross-section, and by Theorem \ref{thm:ellipticbraidstack}, we have an isomorphism 
\[
\cM(\beta_{1/h}, \cC) \cong [(\Sigma_{w} \cap \cC)/T_{w}],
\]
Now Theorem \ref{thm:Twtrans} implies that $[(\Sigma_{w} \cap \cC)/T_{w}]$ has a unique $\CC$-point.
\end{proof}

\begin{remark}\label{rem:RigidityWCV}
    Once we know the equivalence between $G$-connections and Stokes $G$-local systems from Proposition~\ref{prop:EquivStokesGLS}, there should also be an approach to the rigidity of the above Kloosterman connections directly via the description of wild character varieties from \cite{BY}. Let us briefly explain this idea (suggested to us by Philip Boalch):

    The (twisted) wild character variety $\cM_\mathrm{B}(\cC)$ classifying Stokes $G$-local systems on $(\PP^1, \{0,\infty\})$ with the irregular type given by that of the Kloosterman connection and with the monodromy at $0$ in a fixed conjugacy class $\cC\subset G$ is explicitly described as follows (see \cite[\S 6]{BY}): After fixing a base point $b\in \partial_\infty$ on the circle at $\infty$, certain generators of the fundamental group of $\widetilde{\PP^1}$ and identifying $\Aut(\cF_b)\cong G$, we can write
    $$\cM_\mathrm{B}(\cC) =[ \{ (h,S_1,\ldots,S_r)\in H(\partial)\times \prod_{i=1}^r \SS \mathrm{to}_i \mid hS_r\cdots S_1 \in \cC \}/T] $$
    for the wild character stack. Here, $H(\partial)\subset G$ is the ``twist'' of $H=\mathrm{GrAut}(\cF_b)=T=(\CC^\times)^r\subset G$ determined by the irregular class. In this situation, one can identify $H(\partial)=Tw$ for a Coxeter element $w\in W$. Note that for this reason, no choice of twisted conjugacy class in $H(\partial)$ is necessary: There is only a single one, the reason again being that the map $T \to T, t \mapsto t\Ad_w(t^{-1})$ is surjective. 
    
    Moreover, if we number the singular directions in a positive sense starting at $b$, i.e.\ $\AA=\{d_1,\ldots,d_r\}\subset \partial_\infty$, then $\SS \mathrm{to}_i$ denotes the Stokes group at the singular direction $d_i$ (transported to $b$ in the natural way). One can then identify the product of Stokes groups with the space $U \cap \dot{w} U^{-} \dot{w}^{-1}$, similar to \cite{GH}, but without having to pass to a finite cover. The idea is to characterize the roots that contribute to the Stokes groups as follows. First pass to a finite cover, and then consider the roots that are positive for a half-period, but negative for a shifted half-period. This condition cuts out precisely $U \cap \dot{w} U^{-} \dot{w}^{-1}$ inside $U$ (understood to be defined by the first half-period).

This identifies the wild character stack  $\cM_\mathrm{B}(\cC)$ with the same space as above, i.e. 
    \[ \cM_\mathrm{B}(\cC) \cong [(\Sigma_{w} \cap \cC)/T_{w}]. \]
Hence $\cM_\mathrm{B}(\cC)$ consists of a single point, because $T_{w}$ acts transitively.

Finally, this can be understood as an instance where we identify wild character stacks explicitly with the quotient of a braid variety.
\end{remark}

\subsection{Generalized Kloosterman connections}
Here, we discuss a generalization of the above. As before, let $w$ be a regular elliptic element of minimal length in its conjugacy class. Let $m$ be its order, which is a regular elliptic number for $G$. Let $\b \defeq \tilde{w}$ and write $\cM(w) = \cM(\beta)$. 

In \cite{Lu}, a unipotent class is attached to each conjugacy class in $W$. Denote by $\cC_{w}$ the unipotent class attached to the conjugacy class of $w$. 

Recall the generalized Steinberg section $\Sigma_{w}$ constructed in \cite{HL}. He and Lusztig show that $T_w = \ker(t \mapsto t\Ad_w(t^{-1}))$ acts transitively on $\Sigma_w \cap \cC_{w}$. Note that $T_w$ is finite, since $w$ is elliptic. 

\begin{defn}
    A generalized Kloosterman connection of slope $1/m$ is an algebraic $G$-connection $(\cE,\nabla)$ on $\PP^1\setminus\{0,\infty\}$ which satisfies the following properties:
    \begin{enumerate}
        \item At $0$, the connection $\nabla$ has a regular singularity and the local monodromy lies in the unipotent class $\cC_{w}$. 
        \item At $\infty$, the connection is isoclinic of slope $1/m$. 
    \end{enumerate}
\end{defn}

Note that the space $\cM(w,\cC_{w})$ classifies Stokes filtrations of generalized Kloosterman connections.  

\begin{cor}\label{cor:genkloost}
Generalized Kloosterman connections are physically rigid. 
\end{cor}

The proof again uses Theorem \ref{thm:ellipticbraidstack}, to identify $\cM(w,\cC_{w}) = [(\Sigma_w \cap \cC_{w})/T_w]$, and the result then follows from the transitivity of the $T_w$-action. 

\subsection{Physical rigidity of Airy connections}
In this section, we study generalizations of the classical Airy equation. Recall that $h$ denotes the Coxeter number of $G$.
\begin{defn}
    An \textit{Airy connection} is an algebraic $G$-connection on $\AA^1$ which is isoclinic of slope $\nu = \frac{h+1}{h}$ at $\infty$. 
\end{defn}
Special cases were first constructed in \cite{KS2}, and general existence is part of \cite{JY}. Airy connections are cohomologically rigid. 
\begin{theorem}\label{thm:AiryPhysicalRigidity}
Airy $G$-connections are physically rigid. 
\end{theorem}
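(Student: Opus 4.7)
The plan is to mirror the proof of Theorem~\ref{thm:KloostermanPhysicalRigidity}, with the Coxeter braid replaced by its $(h+1)$-th power. First, by Theorem~\ref{thm:Comp}, $G$-connections on $\AA^1$ are equivalent to Stokes filtered $G$-local systems on $\widetilde{\PP^1}$ with trivial data at finite points. Since the Airy slope $\nu=(h+1)/h$ has denominator equal to the Coxeter number $h$, a regular elliptic number whose unique minimal-length representative is a Coxeter element $w$, Lemma~\ref{lem:isoclinicbraid} identifies the associated braid (up to cyclic shift) as $\beta_\nu=\widetilde{w}^{h+1}$. Thus the relevant moduli stack of Stokes filtrations is $\cM(\widetilde{w}^{h+1})$.

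The next step is to identify the moduli of Airy connections with fixed formal type at $\infty$ -- obtained from $\cM(\widetilde{w}^{h+1})$ by rigidifying the formal data at the irregular singularity -- with a quotient stack $[X/T]$. Here $T$ is the maximal torus acting naturally through formal monodromy, and $X$ is the braid variety attached to $\widetilde{w}^{h+1}$. Using general properties of braid varieties attached to positive braids from \cite{BBMY}, I would verify that $X$ is smooth, irreducible, and of dimension $r=\rk(G)$, matching the dimension of the acting torus $T$.

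The core of the argument is then to show that this $T$-action is transitive with finite stabilizer. The input, exactly as in the Kloosterman case, is the ellipticity of $w$: since $w$ has no nonzero fixed vector on $X_*(T)\otimes\QQ$, the map $T\to T$, $t\mapsto t\cdot\Ad_w(t^{-1})$, is a surjective finite isogeny by \cite[Lemma~7.6]{St}. Propagating this surjectivity through the $h+1$ Coxeter factors in the decomposition of $\widetilde{w}^{h+1}$, one obtains that the composite $T$-action on $X$ is still transitive with finite isotropy. It follows that $[X/T]\cong[\pt/F]$ for a finite group $F$, and hence the moduli of Airy $G$-connections with fixed formal type at $\infty$ is a classifying stack of a finite group. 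This is exactly the assertion of physical rigidity.

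The hard part will be the combinatorial setup of the braid variety $X$: one must carry out the correct rigidification of $\cM(\widetilde{w}^{h+1})$, verify smoothness, irreducibility and $\dim X=r$, and carefully track how the elliptic surjectivity of \cite[Lemma~7.6]{St} assembles across all $h+1$ Coxeter factors into a genuinely transitive $T$-action with only finite isotropy. Once these combinatorial and Bruhat-theoretic points are settled, the physical rigidity statement is a direct stack-theoretic consequence, mirroring the final step of the Kloosterman argument.
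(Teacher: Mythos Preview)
Your overall strategy is right, but there are two genuine gaps and one conceptual slip.

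First, the slip: the constraint you need to impose on $\cM(\tilde w^{h+1})$ is not a rigidification of the formal data at $\infty$ (the irregular class is already fixed in the definition of the moduli stack). Rather, since the connection lives on $\AA^1$, there is no singularity at any finite point, so the topological monodromy must be trivial. The correct space is $\cM(\beta_\nu,1)=\mu^{-1}(1)$, the fiber of the monodromy map over the identity.

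Second, and more seriously, you are missing the key algebraic input that makes the braid variety tractable: the identity $\tilde c^{\,h}=\Delta^2$ (the full twist) in $\Br_W^+$. The paper uses this to write $\beta_\nu=\tilde c\Delta^2$ and then, setting $\beta'=\tilde c\Delta$, invokes the identification $\tilde X(\beta'\Delta,1)\cong \tilde X(\beta',w_0)$ from \cite{CGG}. Only after this reduction does one land on a standard braid variety $X(\beta')$ (with Demazure product $w_0$ because $\Delta$ is a right factor of $\beta'$), for which smoothness, irreducibility, and the dimension formula $\ell(\beta')-\ell(w_0)=\dim T$ are available from \cite{CGG}. Without this $\Delta$-trick there is no evident mechanism to establish those properties, and your citation of \cite{BBMY} does not supply one.

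Third, your proposed route to transitivity --- propagating the surjectivity of $t\mapsto t\Ad_w(t^{-1})$ through the $h+1$ Coxeter factors --- does not straightforwardly generalize the Kloosterman argument, where the braid has a single factor and the target is $[\Ad(B)\backslash BwB]$. The paper's argument is different and cleaner: it proves directly (Lemma~\ref{lem:finitestabs}) that if $\pi(\beta)w^{-1}$ is elliptic then the $T$-action on $X(\beta,w)$ has finite stabilizers, via an explicit computation in affine coordinates showing any stabilizer lies in $T^{\pi(\beta)w^{-1}}$. Here $\pi(\beta')w_0=c$ is elliptic. Transitivity then follows from finite stabilizers together with irreducibility and the dimension equality $\dim X(\beta')=\dim T$, with no need to iterate anything across factors.
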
 
As for Kloosterman connections, we will study the associated moduli space of Stokes-filtered $G$-local systems. The theorem will then follow from the following result. 
\begin{prop}\label{prop:Airyfil} Let $c \in W$ be a Coxeter element with lift $\tilde{c} \in \Br_W^+$. Recall that $\mu\colon \cM(\beta_\nu) \to [\Ad(G) \backslash G ]$ denotes the monodromy map. Define $\beta_\nu \defeq \tilde{c}^{h+1}$, and let $\cM(\beta_\nu,1) \defeq \mu^{-1}(1)$. Then $\cM(\beta_\nu,1) \cong [S\backslash \pt] = BS$ for some finite group $S$. 
\end{prop}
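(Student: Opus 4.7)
The plan is to mimic the Kloosterman argument from Theorem~\ref{thm:KloostermanPhysicalRigidity}, incorporating an additional maximal torus quotient. By Lemma~\ref{lem:isoclinicbraid} and the formalism of Section~\ref{s:braids}, I would first identify $\cM(\beta_\nu, 1) = \mu^{-1}(1)$ with the moduli stack of sequences of Borel subgroups $(B_0, B_1, \ldots, B_{h+1})$ satisfying $(B_i, B_{i+1}) \in \cO(c)$ and $B_{h+1} = B_0$, modulo diagonal $G$-conjugation. Using the $G$-action to fix $B_0 = B$, this reduces to $\cM(\beta_\nu, 1) = [\Ad(B) \backslash \widetilde X]$, where
\[ \widetilde X = \{(B_1, \ldots, B_h) : (B, B_1), (B_i, B_{i+1}), (B_h, B) \in \cO(c)\}. \]

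A direct parameter count, using $|\Phi| = rh$ and hence $\dim G = r(h+1)$ for a simple group of rank $r$, gives $\dim \widetilde X = r(h+2)/2 = \dim B$, so the resulting stack is zero-dimensional. Smoothness and irreducibility of $\widetilde X$ follow from the general theory of braid varieties in \cite{BBMY}.

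Next I would extract a slice for the unipotent part of $\Ad(B)$ by iterating the idea underlying the Steinberg cross-section $\Sigma = \Ad(U)\backslash U\dot c U$ used in the Kloosterman case. Using the Bruhat decomposition $BcB = U\dot c TU$, one picks a canonical representative in each $\Ad(U)$-orbit on the iterated configuration, obtaining a smooth irreducible braid variety $X \subset \widetilde X$ of dimension $r$, with $\cM(\beta_\nu,1) = [\Ad(T) \backslash X]$ and $\dim T = r$.

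The main obstacle is to show that the $\Ad(T)$-action on $X$ is transitive with finite stabilizer. The key input is the same surjectivity fact used in the Kloosterman proof: the map $T \to T$, $t \mapsto t\Ad_c(t^{-1})$, is surjective with finite kernel $T^c = Z(G)$ (\cite[Lemma~7.6]{St}). Tracing this through the slice construction, the closing condition $B_{h+1} = B_0$ translates into a condition involving $c$-twisted conjugation of $T$, and one verifies that the residual $\Ad(T)$-action on $X$ is exactly this twisted map under the slice identification $X \cong T$. Surjectivity then gives transitivity, while the finite kernel gives the finite stabilizer $S \subseteq Z(G)$, so that $\cM(\beta_\nu,1) = [T\backslash T] = BS$.
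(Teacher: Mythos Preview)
Your overall architecture is right: reduce $\cM(\beta_\nu,1)$ to a quotient $[T\backslash X]$ with $\dim X=r$, then show the $T$-action is transitive with finite stabilizer. The dimension count $\dim\widetilde X = r(h+2)/2 = \dim B$ is also correct. However, the execution has two genuine gaps.

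First, the claim that smoothness and irreducibility of $\widetilde X$ (or of your slice $X$) follow from \cite{BBMY} is not justified; that reference does not establish these properties for the fiber $\mu^{-1}(1)$. Second, your ``iterated Steinberg cross-section'' is not actually constructed: you assert that one can pick canonical $\Ad(U)$-representatives to obtain a smooth irreducible $X$ of dimension $r$, and then that the residual $T$-action becomes the twisted map $t\mapsto t\,\Ad_c(t^{-1})$ under an identification $X\cong T$. But this identification \emph{is} the content of the proposition, so the argument is circular at this point. In the Kloosterman case the Steinberg section is already available and its intersection with a regular class is a single point by Steinberg's theorem; here there is no analogous off-the-shelf result, and the closing condition $B_{h+1}=B_0$ is what makes the slice non-obvious.

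The paper avoids both issues by a different reduction. The key observation is the braid identity $\tilde c^{\,h}=\Delta^2$ (full twist), so $\beta_\nu=\tilde c\,\Delta^2$; setting $\beta'=\tilde c\,\Delta$, one has $\tilde X(\beta_\nu,1)=\tilde X(\beta',w_0)$, and after fixing $(B_1,B_{r'+1})=(B,B^{\opp})$ this becomes the standard braid variety $X(\beta')$ of \cite{CGG}. Since $\Dem(\beta')=w_0$, that reference gives smoothness, irreducibility, and $\dim X(\beta')=\ell(\beta')-\ell(w_0)=r$ directly. Finite stabilizers for the $T$-action then follow from Lemma~\ref{lem:finitestabs} (using $\pi(\beta')w_0^{-1}=c$ elliptic), and transitivity is deduced from irreducibility plus the equality of dimensions, not from any explicit identification $X\cong T$. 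If you want to salvage your approach, you would need to replace the vague slicing by this braid-variety identification, or supply an independent proof that your $X$ is irreducible and that the $T$-stabilizers are finite.
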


We will need to recall a few facts from \cite{CGG}. Let $\beta = \s_{i_1}\cdots\s_{i_r} \in \Br_{W}$ be any positive braid for now, and denote by $\delta = \Dem(\beta)$ its Demazure (or greedy) product. We denote by $s_{i_k}$ the simple reflection in $W$ which is the image of $\s_{i_k}$. In \textit{loc.~cit.}, the authors consider the braid variety
\begin{align*}
    X(\beta) = \{(B_1,\ldots,B_{r+1})\in (G/B)^{r+1} \mid B_1 = B, B_{i_k} \xrightarrow{s_{i_k}} B_{i_{k+1}}, B_{r+1} = \delta B_1 \}.
\end{align*}
It is a smooth irreducible affine variety of dimension $\ell(\beta) - \ell(\delta)$, which depends on $\beta$ only as a braid (that is, not on its decomposition). In addition, for a Weyl group element $w\in W$, we consider the slightly more general braid varieties 
\begin{align*}
    X(\beta, w) &= \{(B_1,\ldots,B_{r+1})\in (G/B)^{r+1} \mid B_1 = B, B_{i_k} \xrightarrow{s_{i_k}} B_{i_{k+1}}, B_{r+1} = w B_1 \},\\
    \tilde{X}(\beta, w) &= \{(B_1,\ldots,B_{r+1})\in (G/B)^{r+1} \mid B_{i_k} \xrightarrow{s_{i_k}} B_{i_{k+1}}, B_{r+1} \xrightarrow{w^{-1}} B_1 \}.
\end{align*}
There is an explicit description as an affine variety
\begin{align}\label{eqn:affbraid} X(\beta,w) = \{(z_1,\ldots,z_r) \in \AA^r \mid w^{-1} b_{i_1}(z_1) \cdots b_{i_r}(z_r) \in B \},
\end{align}
with $b_{i_k}(z_k) \in G$ satisfying 
\begin{align} \label{eqn:torusaction}
    t b_{i_k}(z_k) = b_{i_k}(\alpha_{i_k}(t) z_k) \Ad_{s_{i_k}}(t) 
\end{align}
for all $t\in T$. For $z=(z_1,\ldots,z_r)$ the corresponding point in $X(\beta,w)$ is given by 
\begin{align}\label{eqn:affcoords}
B_1= B, B_2 = b_{i_1}(z_1) B, \dots ,b_{i_1}(z_1) \dotsm b_{i_r}(z_r)B ).
\end{align}
There is a natural diagonal action of $T$ on $X(\beta,w)$ induced from the action of $T$ on $G/B$. Equations \eqref{eqn:torusaction} and \eqref{eqn:affcoords} give an explicit description in terms of affine coordinates that we do not spell out here. In type A the description can be found in \cite[\S 2.2]{CGGS}. The following is a generalization (or variant) of Lemma 2.10 in \textit{loc.~cit}. We denote by $\pi\colon \Br_{W} \to W$ the natural quotient, whose kernel is the subgroup of pure braids.

\begin{lemma}\label{lem:finitestabs}
    Let $\beta$ be a positive braid, $w\in W$, and assume $\pi(\beta)w^{-1}\in W$ is an elliptic element. Then the action of $T$ on $X(\beta, w)$ has finite stabilizers. 
\end{lemma}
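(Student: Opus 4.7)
The plan is to extract from the defining equation of $X(\beta,w)$ a single homomorphism $\psi\colon T\to T$ whose kernel contains every point-stabilizer, and then to identify $\ker\psi$ with the centralizer $T^{\pi(\beta)w^{-1}}$.

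Applying the commutation relation~\eqref{eqn:torusaction} iteratively to move $t\in T$ from the left past the product $b_{i_1}(z_1)\cdots b_{i_r}(z_r)$ yields
\[ t\cdot b_{i_1}(z_1)\cdots b_{i_r}(z_r) \;=\; b_{i_1}(\chi_1(t)z_1)\cdots b_{i_r}(\chi_r(t)z_r)\cdot \Ad_{\pi(\beta)^{-1}}(t) \]
for characters $\chi_k\in X^*(T)$. Combined with the description~\eqref{eqn:affcoords} this confirms that the $T$-action in affine coordinates is $t\cdot(z_1,\ldots,z_r)=(\chi_1(t)z_1,\ldots,\chi_r(t)z_r)$, and simultaneously produces the crucial correction term $\Ad_{\pi(\beta)^{-1}}(t)\in T$ on the right.

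Next I would introduce the morphism $\tau\colon X(\beta,w)\to T$ obtained by composing $\phi(z)\defeq w^{-1}b_{i_1}(z_1)\cdots b_{i_r}(z_r)\in B$ with the projection $B\twoheadrightarrow B/U\cong T$. Substituting the previous identity gives
\[ \phi(\chi(t)\cdot z) \;=\; \Ad_{w^{-1}}(t)\cdot \phi(z)\cdot \Ad_{\pi(\beta)^{-1}}(t^{-1}), \]
and, since $T$ normalizes $U$ and is abelian, projecting onto $T$ yields
\[ \tau(t\cdot z) \;=\; \tau(z)\cdot \psi(t), \qquad \psi(t)\defeq \Ad_{w^{-1}}(t)\cdot \Ad_{\pi(\beta)^{-1}}(t)^{-1}, \]
where $\psi\colon T\to T$ is a group homomorphism.

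If $t\in T$ stabilizes some $z\in X(\beta,w)$, then $\tau(z)=\tau(t\cdot z)=\tau(z)\psi(t)$, forcing $\psi(t)=1$. The equation $\Ad_{w^{-1}}(t)=\Ad_{\pi(\beta)^{-1}}(t)$ rearranges to $\Ad_{\pi(\beta)w^{-1}}(t)=t$, identifying $\ker\psi = T^{\pi(\beta)w^{-1}}$, the centralizer in $T$ of the Weyl group element $\pi(\beta)w^{-1}$. Under the ellipticity hypothesis, this fixed subgroup is finite, and hence so is every stabilizer. The main obstacle is the opening computation: propagating $t$ past each $b_{i_k}(z_k)$ must be bookkept carefully to verify that the accumulated correction is exactly $\Ad_{\pi(\beta)^{-1}}(t)$ and not some variant involving $w$; once this identity is in hand, the remaining algebraic identification of $\ker\psi$ with $T^{\pi(\beta)w^{-1}}$ is purely formal.
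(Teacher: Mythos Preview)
Your argument is correct and follows essentially the same route as the paper's proof: both use the commutation relation~\eqref{eqn:torusaction} to push $t$ past $b_\beta(z)$, pick off the $T$-component of $w^{-1}b_\beta(z)\in B$ via the decomposition $B=TU$, and deduce that any stabilizing $t$ lies in $T^{\pi(\beta)w^{-1}}$. The only organisational difference is that you package the computation as a $\psi$-equivariant morphism $\tau\colon X(\beta,w)\to T$, whereas the paper works directly with a fixed $z$ and a stabilizing $t$; the content is identical.
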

\begin{proof}
    Let $z \in X(\beta,w)$ and assume $t\in T$ stabilizes it. Write $w_1=\pi(\beta)$ and $b_{\beta}(z)=b_{i_1}(z_1) \dotsm b_{i_r}(z_r)$. Then, by \eqref{eqn:torusaction}, we have
    \[t b_{\b}(z) = b_{\b}(z) \Ad_{w_1^{-1}}(t).\]
    Moreover, $\Ad_{w^{-1}}(t)w^{-1}b_{\b}(z) = w^{-1} b_{\b}(z) \Ad_{w_1^{-1}}(t)$, 
implying 
\begin{align*} \Ad_{w^{-1}}(t)w^{-1}b_{\b}(z)\Ad_{w_1^{-1}}(t)^{-1} &= w^{-1} b_{\b}(z). 
\end{align*}
By \eqref{eqn:affbraid}, we know that $w^{-1}b_{\b}(z) \in B$, so we can write $w^{-1}b_{\b}(z) = du$ with $d\in T$ and $u\in U$. This implies 
\begin{align*} du &= \Ad_{w^{-1}}(t)du\Ad_{w_1^{-1}}(t)^{-1}  \\
&=\Ad_{w^{-1}}(t)d \Ad_{w_1^{-1}}(t)^{-1}\Ad_{w_1^{-1}}(t) u \Ad_{w_1^{-1}}(t)^{-1},
\end{align*}
where $\Ad_{w^{-1}}(t)d \Ad_{w_1^{-1}}(t)^{-1} \in T$ and 
$\Ad_{w_1^{-1}}(t) u \Ad_{w_1^{-1}}(t)^{-1}\in U$. Therefore, we end up with the equality 
\begin{align*} \Ad_{w^{-1}}(t)d \Ad_{w_1^{-1}}(t)^{-1} =d
\end{align*}
in $T$, hence $\Ad_{w_1 w^{-1}}(t) = t$. This means the stabilizer of $z$ is contained in
\[T^{\pi(\beta)w^{-1}} = \{t'\in T \mid \Ad_{w_1 w^{-1}}(t') = t'\}.\]
Since $w_1w^{-1} = \pi(\beta)w^{-1}$ is elliptic, $\frt^{\pi(\beta)w^{-1}}=0$, and $T^{\pi(\beta)w^{-1}}$ is finite. 
\end{proof}

Denote by $\Delta$ the braid lift of the longest element $w_0\in W$. It is a braid word for the braid called the \textit{half-twist}. Recall that for any positive braid $\beta$, we denote by $\Dem(\beta)$ its Demazure product. 
\begin{proof}[Proof of Proposition \ref{prop:Airyfil}]
Recall that one has $\cM(\beta_\nu ,1) = [G\backslash \cM^{\sharp}(\beta_\nu ,1)]$, where
$\cM^{\sharp}(\beta_\nu ,1) = \tilde{X}(\beta_\nu,1)$.
By \cite[Definition 6.12]{Be}, the element $\tilde{c}^{h}$ represents the full-twist in the braid group, i.e.\ $\tilde{c}^{h} = \Delta^2$. In other words, $\b_{\nu} = \tilde{c}\Delta^2$, and we let $\beta'=\tilde{c}\Delta$. By \cite[\S 3.2]{CGG}, we have an identification $\tilde{X}(\beta_{\nu},1) = \tilde{X}(\beta'\Delta,1) = \tilde{X}(\beta',w_0)$, compatible with the $G$-action. Recall that $\tilde{X}(\beta',w_0)$ classifies tuples of Borel subgroups which are in relative position given by $\beta'$ and such that the first and last Borel subgroups (say $B_1$ and $B_{r'+1}$) in the tuple are opposite. We can therefore use the $G$-action to move $(B_1,B_{r'+1})$ to $(B,B^{\opp})$. This identifies
\[ [G\backslash \cM^{\sharp}(\beta_\nu ,1)] \cong [T \backslash X(\beta',w_0)],\]
because $T$ is the stabilizer of $(B,B^{\opp})$. The fact that $\beta'$ has $\Delta$ as a right factor implies $\Dem(\beta') = w_0$. Therefore $X(\beta',w_0) = X(\beta')$ is smooth, irreducible and has dimension 
\[\ell(\beta')-\ell(w_0) = \ell(\beta)-2\ell(w_0) = (h+1)\dim(T)-2\dim(U) = \dim(T). \]
Moreover, $\pi(\beta')w_0 = \pi(\beta) = c$, which is elliptic, and by Lemma \ref{lem:finitestabs} the action of $T$ on $X(\beta')$ has finite stabilizer. As a consequence, the $T$-action is transitive. Indeed, pick any $z\in X(\beta')$ with a closed orbit (this always exists) and let $S\subset T$ be its stabilizer. Then $\dim(T.z)= \dim(T) - \dim(S) = \dim(T) = \dim X(\beta')$, and we necessarily have $T.z = X(\beta')$. This shows $\cM(\beta_\nu ,1) \cong [T\backslash X(\beta')] = [S \backslash \pt]$. 
\end{proof}

\end{document}